\newtheorem{theorem}{Theorem}
\numberwithin{defn}{section}
\newtheorem{prop}{Proposition}\numberwithin{defn}{section}
\newtheorem{lemma}{Lemma}\numberwithin{defn}{section}
\newtheorem{conj}{Conjecture}\numberwithin{defn}{section}
\newtheorem{quest}{Question}\numberwithin{defn}{section}
\newtheorem{subquest}{Subquestion}\numberwithin{subquest}{quest}
\newtheorem{prob}{Problem}\numberwithin{defn}{section}
\title[Galois actions on complex braid groups]{Galois actions on complex braid groups}
\dedicatory{Dedicated to Fran\c cois Digne and Jean Michel on the occasion of their sixtieth birthday.}
\author[Ivan Marin]{Ivan Marin}
\address{Institut de Math\'ematiques de Jussieu, Universit\'e Paris 7, 175 rue du Chevaleret, F-75013 Paris}
\email{marin@math.jussieu.fr}
\subjclass[2000]{Primary 14G32; Secondary 20F36, 20F55}
\keywords{}
\def\C{\mathbbm{C}}
\def\R{\mathbbm{R}}
\def\Z{\mathbbm{Z}}
\def\Q{\mathbbm{Q}}
\def\A{\mathbbm{A}}
\def\Qb{\overline{\mathbbm{Q}}}
\def\P{\mathbbm{P}}
\def\kk{\mathbf{k}}
\def\la{\lambda}
\def\GL{\mathrm{GL}}
\def\Out{\mathrm{Out}}
\def\Aut{\mathrm{Aut}}
\def\Hom{\mathrm{Hom}}
\def\hpi{\widehat{\pi}_1}
\def\GTH{\ensuremath{\widehat{GT}}}
\def\Ker{\mathrm{Ker}}
\def\Gal{\mathrm{Gal}}
\def\Spec{\mathrm{Spec}}
\def\Ad{\mathrm{Ad}}
\def\into{\hookrightarrow}
\def\onto{\twoheadrightarrow}
\def\ss{s}
\def\GGam{ \mathrm{I} \! \Gamma}
\def\eps{\varepsilon}
\def\ii{\mathrm{i}}
\begin{document}

\begin{abstract}
We establish the faithfulness of a geometric action of the absolute Galois group
of the rationals that can be defined on the discriminantal variety associated to
a finite complex reflection group, and review some possible connections
with the profinite Grothendieck-Teichm\"uller group.

\end{abstract}

\maketitle

\section{Introduction}

More than a decade ago, Brou\'e, Malle and Rouquier suggested in \cite{BMR} that lots of the well-known properties of the braid
groups associated to real reflection groups (so-called Artin groups, or Artin-Tits groups, or Artin-Brieskorn groups),
which are consequences either of the simplicial structure of the corresponding hyperplane arrangement or of Coxeter theory,
could actually be generalized to all complex reflection groups, for which no such comprehensive theory apply.
Although a general reason is still missing for most of these properties, and although most proofs rely on a case-by-case
approach using the Shephard-Todd classification, this intuition has been confirmed by large in the past decade.
It should be noticed that the group-theoretic study of these general braid groups is still at the beginning ; for instance, the
determination of nice \emph{presentations} for all these groups has been completed only recently.

At much about the same time, Matsumoto investigated in \cite{MATSUARTIN} the geometric Galois actions on 
the discriminantal variety of Weyl groups and compared it with the ones on mapping class groups. The reason
why this article restricted itself to Weyl groups was probably twofold : the most visible connections
with mapping class groups involved Artin groups of type $A D E$, and the Weyl groups are
the reflection groups which are naturally `defined over $\Q$'.

In a joint work with J. Michel (see \cite{MARINMICHEL}) we proved that the discriminantal varieties arising from complex reflection groups
are actually defined over $\Q$. In the first part of this note we review this result, define the corresponding
Galois action, and prove its faithfulness. We also survey the state-of-the-art for the basic group-theoretic questions which are relevant to the geometric-algebraic setting. The second part of the paper is more speculative. 
It investigates these `complex braid groups', and already usual Artin groups, from a `Grothendieck-Teichm\"uller perspective' : can we expect that the Galois actions on these groups factor through $\GTH$ ?

{\bf Acknowledgements.} I thank B. Collas, C. Cornut, P. Lochak, J. Tong, and A. Tamagawa for discussions,
as well as the referee for a careful examination of the manuscript.

\section{Complex reflection groups}

Let $\kk$ be field of characteristic 0 and $V$
a finite-dimensional $\kk$-vector space.
A pseudo-reflection in $V$ is an element $s$ of $\GL(V)$ whose
set of fixed points $\Ker(s - 1)$ is an hyperplane.
A (finite) reflection group $W$ over $\kk$ is a finite subgroup of $\GL(V)$
generated by pseudo-reflections. We refer to \cite{LEHRTAY} for basic
notions in this area, and recall that these groups have been
classified by Shephard and Todd. When $\kk$ is algebraically closed,
such groups can naturally be decomposed as a product of
\emph{irreducible} ones (which act irreducibly on their
associated vector space) which belong either
to an infinite family $G(de,e,n)$ depending on three integer
parameters $d,e,n$, or to a finite set of 34 exceptions denoted $G_4$, \dots, $G_{37}$. We let $\mathcal{R}$
denote the set of pseudo-reflections of $W$,  $\mu_n(\kk)$ the group of $n$-th
roots of $1$ in $\kk^{\times}$, and $\mu_n = \mu_n(\C)$. Of course we can assume $\kk \subset \C$.

When $| \mu_{de}(\kk)| = de$, that is $\kk \supset \Q(\mu_{de})$,
the groups $G(de,e,n)<\GL_n(\kk)$ are the groups of $n \times n$
\emph{monomial} matrices in $\kk$ (one nonzero
entry per row and per column) whose
nonzero entries belong to $\mu_{de}(\kk)$ and have product
in $\mu_d(\kk)$.

When $\kk = \C$, we let $\mathcal{A} = \{ \Ker(s-1) \ | \ s \in \mathcal{R} \}$
denote the hyperplane arrangement associated to $\mathcal{R}$,
and $X = V \setminus \bigcup \mathcal{A}$ the
corresponding hyperplane complement. By a theorem
of Steinberg, the action of $W$ on $X$ is free and defines
a Galois (\'etale) covering $X \to X/W$.
Let $\alpha_H, H \in \mathcal{A}$ denote a collection
of linear forms such that $H = \Ker \alpha_H$, and
$e_H$ the order of the (cyclic) subgroup of $W$ fixing $H$. By
a theorem of Chevalley-Shephard-Todd, $V // W$ is isomorphic
over $\C$ to the affine space : identifying $V$ with $\C^n$,
there exists $y_1,\dots,y_n \in S(V^*)$
such that $S(V^*)^W = \C[y_1,\dots,y_n]$.
Letting $\Delta = \prod_{H \in \mathcal{A}} \alpha_H^{e_H} \in S(V^*)^W$,
$X/W$ is then identified to the complement of $\Delta = 0$
in $\C^n$.
The groups $B = \pi_1(X/W)$ and $P = \pi_1(X)$ are called
the braid group and pure braid group associated to $W$. As in
the case of the usual braid group (corresponding to
$W = \mathfrak{S}_n \subset \GL_n(\C)$) we have a short sequence
$1 \to P \to B \to W \to 1$. When $W$ is a Coxeter
group, that is a reflection group for $\kk = \R$, the groups
$B$ were basically introduced by Tits and extensively studied since
then.

\subsection{Arithmetic actions}

There is a natural number field $K$ associated to a complex reflection group $W < \GL_N(\C)$, called
the \emph{field of definition} of $W$ : it is the field generated by the traces of the elements of $W$. It
is a classical fact (see e.g. \cite{LEHRTAY}, theorem 1.39) that a suitable conjugation in $\GL_N(\C)$ maps
$W$ into $\GL_N(K)$. Since $W$ is finite, $K \subset \Q(\mu_{\infty})$ is an abelian extension of $\Q$. As
a consequence, $X$ and $X/W$ can be defined over $K$. One has $K \subset \R$ if and only if $W$ is a
Coxeter group, and $K = \Q$ if and only if $W$ is the Weyl group of some root system.
  
We proved in \cite{MARINMICHEL} the following.

\begin{theorem} \label{theorat} (see \cite{MARINMICHEL})The varieties $X/W$, $X$, and
the \'etale covering map $X \to X/W$ can be defined
over $\Q$.
\end{theorem}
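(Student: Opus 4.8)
The plan is to reduce to the case of a $\C$-irreducible $W$ and then to run through the Shephard--Todd classification, treating the infinite family $G(de,e,n)$ uniformly and the $34$ exceptional groups one at a time. Here ``defined over $\Q$'' for the covering is understood to mean that there exist $\Q$-varieties $\mathcal X,\mathcal Y$ and a finite \'etale $\Q$-morphism $\mathcal X\to\mathcal Y$ whose base change to $\C$ is $X\to X/W$; crucially, $W$ itself is \emph{not} required to act $\Q$-rationally. For the reduction, over $\C$ one has $W=W_1\times\cdots\times W_r$ acting on $V=V_1\oplus\cdots\oplus V_r$, so that $X=\prod_i X_i$, $X/W=\prod_i(X_i/W_i)$ and the covering is the product of the coverings $X_i\to X_i/W_i$; since $\Gal(\Qb/\Q)$ merely permutes the factors, descent for a product along a Galois orbit (a Weil restriction) reduces us to a single $\C$-irreducible $W$.

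For $W=G(de,e,n)<\GL_n(\Qb)$ in its monomial realization everything is explicit and manifestly rational. Indeed $\bigcup\mathcal A$ is the zero set of
\[
  \Bigl(\prod_{i=1}^{n}x_i\Bigr)^{\!\eps}\,\prod_{1\le i<j\le n}\bigl(x_i^{\,de}-x_j^{\,de}\bigr),\qquad
  \eps=\begin{cases}1,& d>1,\\ 0,& d=1,\end{cases}
\]
a polynomial with integer coefficients (with the evident modifications when $n\le 2$, e.g.\ for the dihedral groups $G(e,e,2)$), so $X$ is defined over $\Q$; moreover $S(V^*)^W$ is generated by $(x_1\cdots x_n)^d$ together with the elementary symmetric polynomials $\sigma_1,\dots,\sigma_{n-1}$ in $x_1^{de},\dots,x_n^{de}$, all of which lie in $\Q[x_1,\dots,x_n]$, so that $X\to X/W$ is given by $\Q$-polynomials and $X/W$ is the complement in $\A^n_{\Q}$ of the rational hypersurface image of $\bigcup\mathcal A$ --- a power of the last coordinate times a classical discriminant.

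For the exceptional groups $G_4,\dots,G_{37}$ I expect a case-by-case verification. For each such $W$, with field of definition $K$, the idea is to write down explicit coordinates on $V$ over $K$ in which the forms $\alpha_H$ and a complete set of fundamental invariants are available, and then to exhibit a $\GL(V)(\Qb)$-change of coordinates after which (i) the product $\prod_H\alpha_H^{e_H}$ acquires rational coefficients --- the multiplicities $e_H$ being intrinsic to $W$, hence unchanged under Galois conjugation --- so that $X$ and the discriminant hypersurface descend, and (ii) one may choose homogeneous generators $y_1,\dots,y_n$ of $S(V^*)^W$ lying in $\Q[v_1,\dots,v_n]$, so that $X/W$ and the quotient morphism descend. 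Equivalently one may argue by Galois descent: for $\sigma\in\Gal(\Qb/\Q)$ the data attached to $W$ (degrees, numbers of reflections of each order, the intersection lattice of $\mathcal A$ and the multiplicities $e_H$) are $\sigma$-invariant, which by the classification forces $W^\sigma$ to be $\Qb$-conjugate to $W$ and yields isomorphisms of coverings $(X\to X/W)\xrightarrow{\ \sim\ }(X^\sigma\to X^\sigma/W^\sigma)$; one then has to arrange these into a genuine descent datum, the obstruction lying in an $H^2$ of the finite group $\Gal(K/\Q)$ with coefficients in the finite automorphism group of the covering.

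The bulk of the work, and the only real obstacle, is step (ii) for the exceptional types: there is no uniform formula, the field $K$ can be as involved as $\Q(\sqrt{-3})$, $\Q(\sqrt{5})$, $\Q(\sqrt{-7})$, $\Q(\zeta_5)$ or $\Q(\zeta_3,\sqrt 5)$, and one must produce --- by explicit, in several cases machine-assisted, computation with the reflection representation --- coordinates that realize the arrangement \emph{and} a complete system of basic invariants over $\Q$ simultaneously. This cannot be achieved by endowing $V$ with a $\Q$-structure making $W$ rational, since in general no such structure exists: the content of the theorem is precisely that the discriminantal variety and its covering are ``more rational'' than the group acting on them. Once rational models of $X$, $X/W$ and $X\to X/W$ are in hand for every $\C$-irreducible $W$, the theorem follows.
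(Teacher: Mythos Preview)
Your sketch is on the right track---reduce to irreducible $W$, handle $G(de,e,n)$ by explicit $\Q$-polynomial invariants, treat the exceptionals case by case---but it misses the organizing idea that the paper (summarizing \cite{MARINMICHEL}) actually uses. The method is not to search for a change of coordinates making the invariants rational, nor to verify an abstract $H^2$ obstruction. Rather, for each $W$ one exhibits an explicit model $W<\GL_n(K'')$ over a specific abelian extension $K''$ of $\Q$ (equal to $K$ in most cases) such that $\Gal(K''|\Q)$ \emph{normalizes} $W$ setwise: $\sigma.W=W$ for every $\sigma$. Once such a model is in hand, descent is automatic: the Galois action on $S(V_0^*)$ normalizes the $W$-action, so $S(V_0^*)^W$ is Galois-stable and its $\Gal(K''|\Q)$-invariants furnish the $\Q$-form of the coordinate ring of $V//W$; the discriminant $\Delta=\prod_H\alpha_H^{e_H}$ is Galois-fixed (Galois permutes the hyperplanes, preserving the $e_H$) and hence already lies in this $\Q$-form. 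No cocycle bookkeeping is required.

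Your observation that one cannot make $W$ itself rational is correct, but the point is precisely that one works over $K''$, not $\Q$: Galois does not fix $W$ elementwise, only setwise, and this weaker condition---the existence of ``Galois automorphisms'' $\Gal(K''|\Q)\to\Aut(W)$---is both achievable (established case by case in \cite{MARINMICHEL}) and sufficient. Your abstract descent argument leaves the vanishing of the $H^2$ obstruction unjustified; the Galois-normalized model is exactly what trivializes it, by providing a single realization in which the required isomorphisms $(X\to X/W)\simeq(X^\sigma\to X^\sigma/W^\sigma)$ are all the identity.
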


More precisely, we proved that we can attach to each (complex) reflection group
a suitable extension $K''$ of $K$ (with $K = K''$ for most of the groups), which is
still an abelian extension of $\Q$, and which fulfills the following properties :
\begin{itemize}
\item Up to global conjugation by some element of
$\GL_n(\C)$ we have and can assume $W < \GL_n(K'')$ with $\Gal(K''|\Q).W = W$.
This defines a natural action by automorphisms of $\Gal(K''|\Q)$ on $W$,
and we can assume $V$ has a natural $K''$-form $V_0$, namely $V = V_0 \otimes_{K''} \C$ with $W < \GL(V_0)$
-- in particular, the $\alpha_H$ can be chosen in $V_0^*$.
\item This action induces injective morphisms $\Gal(K''|\Q) \into \Aut(W)$ and $\Gal(K|\Q) \into \Out(W)$.
\item The action of $\Gal(K''|\Q)$ normalizes the action of $W$ on $S(V_0^*)$,
hence $S(V_0^*)$, $S(V_0^*)^W$ have natural $\Q$-forms $S(V_0^*) = S(V_0^*)^{\Gal(K''|\Q)}\otimes K'' $, $S(V_0^*)^W = S(V_0^*)^{W,\Gal(K''|\Q)} \otimes K''$
defining $V$, $V//W = \Spec  S(V_0^*)^{W,\Gal(K''|\Q)}$ over $\Q$, as well as $X/W$.
\end{itemize}

As a consequence we get natural compatible maps
$G(\Q) \to \Out^*(\hpi (X/W))$ and $G(\Q) \to \Out^*(\hpi (X))$, where
we denote, for an algebraic variety $Y$, $\Aut^* \hpi Y$ the group of inertia-preserving automorphisms
of $\hpi Y$, and $\Out^* \hpi Y$ its image in $\Out \, \hpi Y$
(and we use the notation  $G(\Q) = \Gal(\Qb | \Q)$ for the absolute Galois group of the rationals).
Since the \'etale covering map $X \to X/W$ is defined over $\Q$, one can check on the
fiber above a rational base point that the action of $G(K'') = \Gal(\Qb | K'')$
is trivial on the covering $X \to X/W$. This induces an action by automorphisms of $\Gal(K''|\Q)$
on $W$ which is easily seen to be identical to the one defined above, and thus provides
a geometrical origin to the `Galois automorphisms' of \cite{MARINMICHEL}.
The situation is depicted in the following commutative diagrams.

$$
\xymatrix{
G(\Q) \ar[r] \ar[d] & \Aut^*(\widehat{B}) \ar[d] \\
\Gal(K''|\Q) \ar[r]& \Aut(W) 
}
\ \ 
\xymatrix{
G(\Q) \ar[r] \ar[d] & \Out^*(\widehat{B}) \ar[d] \\
\Gal(K|\Q) \ar[r]& \Out(W) 
}
$$

It turns out that these algebraic varieties, beside being rational,
are `anabelian' enough in the sense that the action of the Galois group
on their fundamental group is faithful. Informally, this `lifts'
at the level of the braid group the embedding $\Gal(K|\Q) \into \Out(W)$
defined in \cite{MARINMICHEL}.

\begin{theorem}\label{fidel} If $W$ is not abelian, then the maps $G(\Q) \to \Out(\hpi (X))$
and $G(\Q) \to \Out(\hpi (X/W))$ are injective.
\end{theorem}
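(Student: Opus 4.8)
The plan is to reduce everything, via Belyi's theorem, to hyperbolic curves. The mechanism is this: suppose that over some number field $k$ we have a $\Gal(\Qb|k)$-equivariant continuous surjection of $\hpi(X)$ (resp.\ $\hpi(X/W)$) onto $\hpi(Y)$, where $Y$ is a hyperbolic curve over $k$, or more generally a hyperbolic orbifold curve over $k$. A surjection carries inner automorphisms to inner automorphisms and its kernel is Galois-stable, so $\ker(\Gal(\Qb|k)\to\Out\hpi(X))$ maps into $\ker(\Gal(\Qb|k)\to\Out\hpi(Y))$, which is trivial: $\Gal(\Qb|k)$ acts faithfully on the $\hpi$ of a hyperbolic curve over a number field (the standard consequence of Belyi's theorem, the orbifold case reducing to the honest one by passing to the finite étale cover $C\to[C/\text{(finite group)}]$). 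Finally $\ker(G(\Q)\to\Out\hpi(X))$ is a closed normal subgroup of $G(\Q)$ meeting the finite-index subgroup $\Gal(\Qb|k)$ trivially, hence finite, hence trivial, since by the Artin--Schreier theorem $G(\Q)$ has no nontrivial finite normal subgroup. So it suffices, for $X$, to produce such a surjection, and for $X/W$ either to do the same or to reduce to the case of $X$.

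For $X$: since $W$ is not abelian it contains two non-commuting reflections $s,t$, so $\langle s,t\rangle$ is a non-abelian rank-two reflection group, and therefore has at least three reflecting hyperplanes; these lie in $\mathcal A$ and all contain the codimension-two subspace $Z:=V^{\langle s,t\rangle}$. The $\alpha_H$ are defined over $K''=:k$, hence so is $Z$. Composing the linear projection $V\to V/Z$ with $(V/Z)\setminus\{0\}\to\P(V/Z)$ gives a morphism $\pi\colon X\to C:=\P(V/Z)\setminus D$, where $D$ is the divisor (of degree $\ge 3$) of the lines through which the hyperplanes containing $Z$ pass; one has $\pi(X)\subseteq C$ because points of $X$ avoid every hyperplane of $\mathcal A$, and $\pi$ is surjective onto $C$ with generic fibre the complement of a central hyperplane arrangement inside an affine space, which is geometrically connected. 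Hence $\hpi(X)\onto\hpi(C)$ with $C$ a hyperbolic curve over $k$, and the first assertion follows.

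For $X/W$: if $W$ admits a $W$-stable codimension-two flat $Z$ through which at least three hyperplanes pass --- e.g.\ when $W$ is reducible with a non-abelian factor, or is irreducible of rank two (take $Z=0$) --- then $\pi$ is equivariant for the induced action of $W$ on $\P(V/Z)$ and, $W$ acting freely on $X$ by Steinberg's theorem, descends to $X/W\to[C/\overline W]$, with $\overline W$ the image of $W$ in $\PGL(V/Z)$; comparing the extensions $1\to P\to\hpi(X/W)\to W\to1$ and $1\to\hpi(C)\to\hpi([C/\overline W])\to\overline W\to1$ produces the desired surjection $\hpi(X/W)\onto\hpi([C/\overline W])$ onto a hyperbolic orbifold curve over $k$. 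The remaining case is $W$ irreducible of rank $\ge3$; here one uses instead the $G(\Q)$-stable (Theorem \ref{theorat}) extension $1\to P\to B\to W\to1$, $B=\hpi(X/W)$, $P=\hpi(X)$. If $\sigma\in G(\Q)$ acts on $B$ by $\mathrm{Inn}_B(b)$, then it acts on $W=B/P$ by the inner automorphism $\mathrm{Inn}_W(\bar b)$, so by the injectivity of $\Gal(K|\Q)\into\Out(W)$ from \cite{MARINMICHEL} the image of $\sigma$ in $\Gal(K|\Q)$ vanishes; when $K''=K$ this makes the arithmetic action of $\sigma$ on $W$ trivial, whence $\bar b\in Z(W)$. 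The action of $\sigma$ on $P$ is $\mathrm{Inn}_B(b)|_P$, whose class in $\Out(P)$ equals the image of $\bar b$ under the monodromy $W\to\Out(P)$; and the \emph{key point} is that $Z(W)\to\Out(P)$ is trivial, because $Z(W)$ consists of homotheties of $V$, each joined to $\mathrm{id}_V$ inside the group $\C^{\times}$ of homotheties acting on $X$, so --- taking a scalar arc as lifting path --- inducing the identity on $\pi_1(X)$. Therefore $\sigma\in\ker(G(\Q)\to\Out\hpi(X))=\{1\}$ by the previous paragraph.

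The step I expect to be the main obstacle is the last one when $K''\ne K$: then $\sigma\in\Gal(\Qb|K)$ need not act trivially on $W$, and one must show that the element $w_0\in W$ effecting the twist coming from $\Gal(K''|K)$ still lies in the kernel of $W\to\Out(P)$ (equivalently, that $\sigma$ acting by an inner automorphism of $B$ forces $\bar b\in Z(W)$), or else treat the finitely many such groups case-by-case --- those admitting a $W$-stable codimension-two flat being already covered by the orbifold-curve argument. The auxiliary inputs --- faithfulness of the Galois action on the $\hpi$ of hyperbolic (orbifold) curves over number fields, and the precise isotopy computation underlying the triviality of $Z(W)\to\Out(P)$ --- are routine but should be carried out with care.
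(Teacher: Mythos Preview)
Your argument for $X$ is correct and parallels the paper's: both project $X$ onto a hyperbolic curve in $\P^1$ via a codimension-two flat cut out by two non-commuting reflections, and both check surjectivity on $\hpi$ by the connectedness of the fibres. The descent from $G(k)$ to $G(\Q)$ is where you diverge: the paper produces a second morphism $\delta:X\to\mathbbm{A}^1\setminus\{0\}$ (the discriminant) defined over $\Q$ and uses the cyclotomic character to force any element of the kernel into $G(\Q(\mu_\infty))\subset G(K)$, whereas you invoke the absence of nontrivial finite normal subgroups in $G(\Q)$. Your route is shorter and avoids the second fibration.

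For $X/W$ the two arguments again run in parallel: both reduce to the injectivity for $X$ by showing that if $\sigma$ acts on $\widehat{B}$ by $\Ad(g)$ with $\bar g\in Z(W)$, then $\sigma$ already acts innerly on $\widehat{P}$. The paper deduces this from the surjectivity $Z(B)\twoheadrightarrow Z(W)$ established in \cite{BMR} (so $g$ can be corrected by a central element of $B$ to lie in $\widehat{P}$); your isotopy argument through scalars is an equivalent way to see that $Z(W)\to\Out(\widehat{P})$ is trivial, with the advantage of not citing that external result. One wording quibble: the scalar isotopy shows the induced automorphism of $\pi_1(X)$ is \emph{inner}, not literally the identity --- but that is exactly what you need.

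Your acknowledged gap at $K''\neq K$ dissolves under your own method. The argument you give shows verbatim that $\ker\bigl(G(\Q)\to\Out\widehat{B}\bigr)\cap G(K'')=\{1\}$, since for $\sigma\in G(K'')$ the arithmetic action on $W$ is genuinely trivial and hence $\bar g\in Z(W)$; the kernel is therefore finite normal in $G(\Q)$ and trivial by Artin--Schreier, with no case-by-case analysis needed. The paper instead observes that $K''$, being abelian over $\Q$, lies in $\Q(\mu_\infty)$, and uses the discriminant map $\delta:X/W\to\P^1\setminus\{0,\infty\}$ over $\Q$ to place $\sigma$ directly inside $G(\Q(\mu_\infty))\subset G(K'')$. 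Either way the orbifold-curve detour --- and the attendant case distinction on rank and reducibility --- is unnecessary.
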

\begin{proof}

We can choose a $K$-form $V = V_K \otimes_K \C$ with $W < \GL(V_K)$. 

Notice that, since $K \subset \Q(\mu_{\infty})$ then $K$ is an abelian (Galois) extension 
of $\Q$, and is stable under the complex conjugation $z \mapsto \bar{z}$. 
We can thus endow $V_K$
with a unitary
(positive definite hermitian) form which is left invariant under the finite group $W$, and extend it to $V$. For $H \subset V$,
we denote $H^{\perp} \subset V$ the orthogonal of $H$ with respect to this unitary
form.

Since $W$ is not abelian and generated by $\mathcal{R}$, there
exists $s_1,s_2 \in \mathcal{R}$ with $s_1 s_2 \neq s_2 s_1$.
Let $H_i = \Ker(s_i - 1)$ and $H = H_1 \cap H_2$. 
One can assume that $s_i$ generates the (cyclic) group of $W$ which fixes $H_i$ and more
precisely, letting $d_i$ denote the order of that group, that the non-trivial eigenvalue of $s_i$
is $\exp(2 \ii \pi/d_i)$. Such a pseudo-reflection is classically called `distinguished', and clearly such
pseudo-reflections are in 1-1 correspondence (under $s \mapsto \Ker(s-1)$) with the collection of reflecting hyperplanes.
Choosing $v_1 \in H^{\perp} \cap H_2$,
$v_2 \in H^{\perp} \cap H_1$, and $v_3,\dots,v_n$ a basis for $H$,
in such a way that $v_1,\dots,v_n \in V_{K}$ (this is possible because the hyperplanes are defined over $K$)
one gets a basis $v_1,\dots,v_n$ of $V_{K}$ and $V$, hence an identification
of $V_{K}$ and $V$ with $K^n$ and $\C^n$ respectively, and of $H_i$ with the hyperplane defined by $z_i = 0$
if $\underline{z} = (z_1,\dots,z_n)$ denotes a generic point of $K^n$.
Let $W_0$ be the subgroup of $W$ generated by $s_1,s_2$.
It is a reflection subgroup of $W$ that fixes $H$. Since $W_0$
is not abelian, it contains a pseudo-reflection with respect to
another hyperplane.
Indeed, $s_1 s_2 s_1^{-1} \in W_0$ is a distinguished reflection with reflecting hyperplane
$s_1(H_2)$, and $s_1(H_2)
 \in \{ H_1,H_2 \}$ would imply $s_1 s_2 s_1^{-1} = s_1$ or
 $s_1 s_2 s_1^{-1} = s_2$, meaning $s_2 = s_1$ or $s_1 s_2 = s_2 s_1$,
 which have been ruled out.

The equation of this new hyperplane has the form $z_1 = \alpha z_2$
with $\alpha \neq 0$ and $\alpha \in K \subset \overline{\Q}$.
We consider the morphism $V \setminus \{ \underline{z} \ | \ (z_1,z_2) = (0,0) \}
\to \P^1$ defined by $\underline{z} \mapsto [z_1 : z_2]$.
It maps $\bigcup \mathcal{A}\ $Êto a finite set $S$ of points, including
$0 = [0:1], \infty= [1:0], \alpha = [\alpha : 1]$, hence
induces 
an algebraic morphism $f : X \to \P^1 \setminus S$
defined over $K$. 
Moreover, the
induced map $f_* : \hpi X  \to \hpi \P^1 \setminus S$ is equivariant
under $G(K)$ and \emph{surjective}.

The easiest way to see this is
to prove it topologically
on the usual $\pi_1$'s
and then
use the right-exactness of the profinite completion functor. 
We do this now. Let $\mathcal{A}' = \{ H' \in \mathcal{A} \ | \ H' \not\supset H \}$, and choose
$\underline{\zeta}_0 = (0,0,\zeta_3,\dots,\zeta_n) \in H \setminus \bigcup \mathcal{A}'$. Since
$H \setminus \bigcup \mathcal{A}'$ is open in $H$, for $\eps > 0$ small
enough we have $\underline{\zeta} = (\zeta_1,\zeta_2,\dots,\zeta_n) \in X$ whenever
$0 < |\zeta_1| < \eps$, $0 < |\zeta_2| < \eps$, and $[z_1:z_2] \not\in S$. We choose such
an $\eps$. Now $\mathbbm{P}^1 \setminus S$ can be identified with $\C \setminus S_0$ under
$[z_1 : z_2] \mapsto z_1/z_2$, where $S_0$ is some finite set. Let $M = 1 + \max\{ | z | ; z \in S_0 \}$
and choose $z_0 \in \C \setminus S_0$ with $|z_0| \leq M$ for base-point. A loop $\gamma$
in $\C \setminus S_0$ based at $z_0$ can be homotoped to a loop satisfying $|\gamma(t)| \leq M +1$
for all $t$, and such a loop has a lift of the form $t \mapsto (\eps \frac{z(t)}{M},\frac{\eps}{M},\zeta_3,\dots,\zeta_n)$
in $\pi_1(X(\C),\underline{\zeta}_0(\eps))$ with $\underline{\zeta}_0(\eps) = (\eps \frac{z_0}{M}, \frac{\eps}{M},\zeta_3,\dots,
\zeta_n)$, which proves the surjectivity.

Another way
is to check that $f$ is faithfully flat and quasicompact, hence universally
submersive, and that it has geometrically connected fibers ; the conclusion then
follows from \cite{SGA1} (exp. IX cor. 3.4). Indeed, $f$ is affine, more precisely $f = \Spec \varphi$ with 
$$
\varphi : A = K[x,y,x^{-1},y^{-1}, (y- \alpha_i x)^{-1} ] \into K [z_1,\dots,z_n,\alpha_{L_1}^{-1},\dots,
\alpha_{L_m}^{-1}]
$$
defined by $x \mapsto z_1, y \mapsto z_2$, with 
$\alpha_{H_1},\dots,\alpha_{H_m}$ 
some linear forms
defining (over $K$) the hyperplanes. Identifying $A$ with its image under $\varphi$, we have
$K[z_1,\dots,z_n,\alpha_{H_1}^{-1},\dots,
\alpha_{H_m}^{-1}] = A[z_3,\dots,z_n,L_1^{-1},\dots,L_r^{-1}] = B$ with each $L_i \in A[z_3,\dots,z_n]$
of degree $1$. Now $B$ is faithfully flat over $A$ as the localization of a polynomial algebra $C = A[z_3,\dots,z_n]$
over $A$ at an element $P = L_1 \dots L_r$ with $P \neq 0$ inside $C/ \mathfrak{P} C$ for
each prime ideal $\mathfrak{P}$ of $A$. In particular $f$ is surjective ; since it is
affine it is quasi-compact if and only if
$f^{-1}(\Spec A) = \Spec B$ is quasi-compact, and this holds because $B$ is noetherian.
Finally, since $f$ is the composition of an affine open immersion
and of the projection $\mathbbm{A}^n \to \mathbbm{A}_2$, its fibers are open
subsets of an affine space, and are thus connected.

Since $|S| \geq 3$, $\P^1 \setminus S$
is an hyperbolic curve over $K$, so by Belyi's theorem and \cite{MATSUMOTO},
the natural map $G(K) \to \Out \hpi \P^1 \setminus S$ is injective,
and so is $G(K) \to \Out \hpi X = \Out \widehat{P}$ by the surjectivity
of $f_* : \hpi X \to \hpi \P^1 \setminus S$. Now $X$ is the complement
of an hypersurface in $\mathbbm{A}^n$ defined by a reduced equation of
the form $\delta=0$  for some $\delta \in \Q[x_1,\dots,x_n]$
the product of convenient $\alpha_H$, $H \in \mathcal{A}$. The function
$\delta$ induces a morphism $g : X \to \mathbbm{A}^1 \setminus \{ 0 \} = \mathbbm{P}^1
\setminus \{ 0, \infty \}$ defined over $\Q$. 
Let $\psi : G(\Q) \to \Out \hpi X = \Out \widehat{P}$. We prove that the induced
morphism $\hpi X \to \hpi \mathbbm{A}^1 \setminus \{ 0 \}$
is surjective. It is enough to prove that $f_* : \pi_1(X(\C), \underline{\zeta}) \to
\pi_1(\C \setminus \{ 0 \}, f(\underline{\zeta}))$ is surjective. For this, we
can assume $V = \C^n$, $\delta = u_0u_1\dots u_m = u_0 \hat{\delta}$
with $u_i \in V^*$ and $i \neq j \Rightarrow \Ker u_i \neq \Ker u_j$, and that
$u_0 : \underline{z} = (z_1,\dots,z_n) \mapsto z_1$
is the first coordinate. Since $\Ker u_i \neq \Ker u_0$ for all $i \neq 0$,
there exists an element $\underline{\zeta}_0 \in \Ker u_0 \setminus \bigcup_{i \neq 0} \Ker u_i$.
We let $\underline{\zeta}_{\eps} = (\eps,\zeta_2,\dots,\zeta_n)$ and
$\gamma(t) = (\eps e^{2 \ii \pi t}, \zeta_2,\dots,\zeta_n)$. We have $\delta(\gamma(t)) = 
\eps e^{2 \ii \pi t} \hat{\delta}(\gamma(t))$. For $\eps > 0$ small enough, $\hat{\delta}(\gamma)$ is close to $\hat{\delta}(\underline{\zeta}) \neq 0$,
hence $t \mapsto \delta(\gamma(t))$ describes a positive turn around $0$, whose class
generates $\pi_1(\C \setminus \{ 0 \}, \delta(\underline{\zeta}_{\eps}))$,
and is the image of $[\gamma] \in \pi_1(X(\C), \underline{\zeta}_{\eps})$.

Since the kernel
of the cyclotomic character
$G(\Q) \to \Out \hpi \P^1 \setminus \{ 0 ,\infty \}$ is $\Q(\mu_{\infty})$,
we get that every element in $\Ker \psi$ fixes $\Q(\mu_{\infty})$,
hence belongs to $G(K)$. Since we know the action of $G(K)$
is faithful this proves $G(\Q) \into \Out \hpi X = \Out \widehat{P}$.

We now consider the map $\varphi : G(\Q) \to \Out \widehat{B} = \Out \hpi X/W$,
and a lift $\tilde{\varphi} : G(\Q) \to \Aut \widehat{B}$ given by
the choice of a rational base point. We can assume that this base
point lies in the image of $X(\Q) \to (X/W)(\Q)$. 
An element 
$\sigma$ in $\Ker \varphi$ should induce on $\widehat{P}$
the conjugation $\Ad(g)$ by an element $g$ of $\widehat{B}$.
Let $\bar{g}$ its image in $\widehat{B}/\widehat{P} = W$. If $\sigma \in G(K)$,
$\Ad(\bar{g})$ is the identity automorphism, hence $\bar{g} \in Z(W)$. Now
the morphism $Z(B) \to Z(W)$ is known to be onto (see \cite{BMR}) hence
we can assume that $\sigma \mapsto \Ad(g)$ with $\bar{g} = 1$, hence $g \in \widehat{P}$ ;
this implies $\sigma = 1$ as we showed before. But using again $\delta : X/W \to \mathbbm{P}^1 \setminus
\{ 0, \infty \}$ we have $\sigma \in G(\Q(\mu_{\infty}))$. Since $K \subset \Q(\mu_{\infty})$
this concludes the proof.

\end{proof}

\subsection{A remark on complex conjugation}

A consequence of theorem \ref{theorat} is that the complex conjugation,
being a continuous automorphism of $\C$, induces an automorphism $\sigma$ of
$B = \pi_1(X/W,z)$, for $z$ a \emph{real} point in $X/W$.
Such an automorphism setwise stabilizes $P$ and
is well-defined up to inner automorphism. In case
$W$ is a Coxeter group, one can take for base point an arbitrary point inside the
Weyl chamber. Then $B$ has a classical Tits-Brieskorn presentation,
with generators $s_1,\dots,s_n$ and Coxeter-like relations. Geometrically,
it is clear that $\sigma$ maps $s_i \mapsto s_i^{-1}$. It coincides with the well-known `mirror image'
automorphism in the theory
of Artin-Tits groups.

In the complex setting however, the question of finding Artin-like
presentations for $B$ has been the subject of intense studies in the past decade,
starting from the partly conjectural presentations of \cite{BMR}.
The complex conjugation is easier to see on some presentations
than others (recall that its image in $\Aut(W)$ has been studied
in \cite{MARINMICHEL}). We do an example here.

For instance, let us consider the group $W$ of rank 2 and type $G_{12}$.
Then $B$ has presentation $<s,t,u \ | \ stus = tust = ustu >$,
and one can check on this presentation
that it admits an automorphism of order 2 given by $s \mapsto t^{-1},
t \mapsto s^{-1}, u \mapsto u^{-1}$. The identification of this automorphism with the
complex conjugation asks for getting back to the source of this
presentation, which lies in \cite{BANNAI}, and to redo
it in an algebraic and Galois-equivariant way. Redoing the computations
from the $\Gal(K'' | \Q)$-invariant models provided
in \cite{MARINMICHEL}, one finds $\C[x_1,x_2]^W = \C[\alpha,\beta]$ with
$\alpha = (x_1^2 - x_2^2)(x_1^4 + 12 x_1^2 x_2^2 + 4 x_2^4)$
and $\beta = (x_1^2-4x_1x_2-2x_2^2)(x_1^2 + 4 x_1 x_2 - 2 x_2^2)(3 x_1^4
+ 4 x_1^2 x_2^2 + 12 x_2^4)$, and the `discriminantal equation'
defining $X/W$ inside $\Spec \C[\alpha,\beta]$
is $0 = \beta^3 - 27 \alpha^4$. Up to a rational
change in coordinates one gets that the identification
of $X/W$ with $\{ (z_1,z_2) \ | \ z_2^3 \neq z_1^4 \}$
given by \cite{BANNAI} is indeed over $\Q$ and Galois-equivariant.
One can then consider the rational morphism $f : X/W \to \mathbbm{A}^1$
which maps $(z_1,z_2) \mapsto z_1$, which has for typical
fiber $\mathbbm{A}^1 \setminus \mu_3$. Then Bannai
shows that $B$ is generated by the image of the $\pi_1$ of this fiber,
and the chosen generators $s,t,u$ are simple loops around
the points of $\mu_3(\C)$. Taking the base point on
the real line of absolute value $> 1$, we gets that the automorphism
described above is indeed the image of the complex conjugation.
The issue is that, Bannai does in general \emph{not} choose the base point
on the real line. For $G_{12}$ this has little consequences (taking the
base point on the real line provides the formula above),
but for the other exceptional groups there is a need
in getting presentations which are `compatible' with the real
structure.

\begin{prob} Find presentations of the exceptional braid groups with `nice'
conjugation automorphism / Find explicit formulas for the conjugation
in terms of the known presentations of these groups.
\end{prob}

For the general series of the $G(e,e,n)$
a nice presentation for $B$ has been obtained
by R. Corran and M. Picantin in \cite{CORRANPICANTIN},
with generators $t_i , i \in \Z/e\Z$, $s_3,s_4,\dots,s_n$
and relations $t_{i+1} t_i = t_{j+1}t_j$, $s_3 t_i s_3 = t_i s_3 t_i$
and ordinary braid relations between the $s_3,\dots,s_n$.

It is deduced by algebraic methods from one given in \cite{BMR}. We gave in \cite{HOMOLOGY}
a topological description of this presentation, with base point in the
real part of $X/W$. Since
all the morphisms used there are algebraic over $\Q$,
it is easily checked that this presentation behaves
nicely with respect to the complex conjugation,
and we get the explicit formulas $s_k \mapsto s_k^{-1}$,
$t_i \mapsto t_{-i}^{-1}$.

The following question was asked to me by F. Digne. It is well-known that
non-isomorphic $W$ may provide the `same' braid group $B$. This happens
for instance when the spaces $X/W$ are analytically isomorphic, in which case both complex
conjugations provide the same element in $\Out(B)$. However, there are some other
coincidences where geometry does not seem to help decide.

\begin{quest} Is it possible to get two reflection groups $W_1,W_2$ with
isomorphic braid group $B$, such that the complex conjugation induces
two distinct elements in $\Out(B)$ ?
\end{quest}

This question should be viewed as a test about the comprehension of
the coincidence phenomena (see question \ref{questcoinc} below).

A good test for a positive answer to this question seemed to be
the exceptional group called $G_{13}$ in the Shephard-Todd
classification. It has rank 2, and its braid group $B$ is isomorphic
to the Artin group of type $I_2(6)$, in a seemingly
non-geometric way. Moreover, $\Out(B)$ is rather large
(it is precisely isomorphic to $(\Z \rtimes (\Z/2)) \times (\Z/2)$, see \cite{CRISPPARIS}).
For $G_{13}$, $Y_1 = X/W = \{ (x,y) \ | \ y(x^3-y^2) \neq 0 \}$,
whereas for the dihedral group $I_2(6)$, 
$Y_2 = X/W = \{ (x,y) \ | \ x^6 \neq y^2 \}$. By Zariski-Van Kampen
method Bannai found the presentations
$< g_1,g_2,g_3 \ | \ g_1 g_2 g_3 g_1 = g_3 g_1 g_2 g_3, g_3 g_1 g_2 g_3 g_2 = g_2 g_3 g_1 g_2 g_3 >$
coming from $Y_1$, whereas the presentation arising from $Y_2$
is the usual $<a , b \ | \ ababab = bababa >$. An explicit automorphism is
given by $a \mapsto g_3 g_1 g_2 g_3$, 
$b \mapsto g_3^{-1}$.

Although $Y_1$ and $Y_2$ are homotopically equivalent (as they are $K(\pi,1)$ with
the same $\pi_1$) they do not seem to be geometrically related.
Surprisingly enough, an explicit computation shows that the complex conjugation
provides the same element in $\Out(\hat{B})$. Indeed, from a suitable choice
of a base point one gets $g_1 \mapsto g_1^{-1}$, $g_2 \mapsto g_1 g_2^{-1} g_1^{-1}$
and $g_3 \mapsto g_1 g_2 g_3 g_2^{-1} g_1^{-1}$, which, under the
above isomorphism, translates into
$a \mapsto (bab)a^{-1} (bab)^{-1}$, $b \mapsto (ba)b^{-1} (ba)^{-1} = (bab)b^{-1} (bab)^{-1}$,
which has clearly same image in $\Out(B)$ as the complex conjugation for $I_2(6)$.

\subsection{Group-theoretic properties of complex braid groups}

We consider now the possible goodness of these groups,
in the sense of Serre (\cite{SERRE} \S 2.6). We recall from there
that a group $G$ is called good if, for every
finite $G$-module $M$ and positive integer $q$, the natural maps
$H^q(\widehat{G},M) \to H^q(G,M)$ are isomorphisms. Goodness
is inherited by finite-index subgroups, and a crucial property
is that, if $G$ is an extension of $Q$ by $H \triangleleft G$,
then $G$ is good as soon as $H,Q$ are good, when $H$ is
finitely generated and its cohomology groups with coefficients
in finite $H$-modules are finite. This last assumption is
clearly verified by groups of type $FP_{m}$ for some $m$, that is a group $H$
such that the trivial $\Z H$-module $\Z$ admits a resolution
of finite length by free modules of finite rank. This obviously holds
for the fundamental group of an affine complex algebraic variety
of dimension $m$, as it is homotopically equivalent to
a finite CW-complex of (real) dimension $m$.

A well-known consequence is that iterated extension of free groups
are good. Recall that the profinite completion functor is right exact,
and that every short exact sequence  $1 \to H \to G \to K \to 1$
yields a short exact sequence $1 \to \widehat{H} \to \widehat{G} \to \widehat{K} \to 1$ when $H$
is finitely generated and $K$ is good.

\begin{prop} 
For all but possibly finitely many
irreducible reflection groups $W$, the groups $P$ and $B$ are good in the sense
of Serre. The possible exceptions are the ones
labelled
$G_{23}, G_{24},G_{27},G_{28},G_{29}$,
$G_{30},G_{31}$,
$G_{33},G_{34},G_{35},G_{36},G_{37}$ in the Shephard-Todd classification.
\end{prop}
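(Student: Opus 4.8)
The plan is to reduce the statement to the goodness of the pure braid group $P$, and then, for every $W$ outside the twelve listed groups, to build $P$ (or $B$) out of free groups and copies of $\Z$ by successive extensions. From $1\to P\to B\to W\to 1$ the reduction is immediate: $W$ is finite, hence (trivially) good; $P=\pi_1(X(\C))$ is the fundamental group of a smooth affine complex variety, hence of type $FP_m$ for some $m$ -- in particular finitely generated, with finite cohomology in every finite module. So by the extension criterion recalled above, $P$ good $\Rightarrow B$ good, while conversely $B$ good $\Rightarrow P$ good because goodness is inherited by finite-index subgroups. Moreover, again by that criterion, any group obtained from the trivial group by iterated extensions, each with kernel a finitely generated free group or a copy of $\Z$, is good; this covers $\pi_1(X(\C))$ whenever $\mathcal A$ is a \emph{fiber-type} (supersolvable) arrangement, since then $X(\C)$ is the total space of a tower of locally trivial fibrations whose fibers are complements of finitely many points in $\C$.

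For the infinite series $W=G(de,e,n)$: when $d\ge 2$ the arrangement $\mathcal A$ equals the one of $G(de,1,n)$ -- the coordinate hyperplanes $z_i=0$ are reflecting for both -- and this last arrangement is the classical fiber-type arrangement, whose tower of fibrations is obtained by forgetting one coordinate at a time; hence $P$ is an iterated extension of finitely generated free groups. When $d=1$, i.e. $W=G(e,e,n)$: the case $n=2$ is the dihedral group $I_2(e)$, whose complement is a $\C^\times$-bundle over $\P^1$ minus $e$ points, so that $P$ is a central extension of a finitely generated free group by $\Z$ and hence good; for $n\ge 3$ the presentation of Corran--Picantin together with its topological interpretation (\cite{CORRANPICANTIN},\cite{HOMOLOGY}) realize $X/W$ through an iterated fibration, hence $B$ through successive extensions with free (or $\Z$) kernels, so that $B$, and then $P$, is good.

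For the exceptional groups not on the list: for the rank-two groups $G_4,\dots,G_{22}$ the complement $X\subset\C^2$ is the complement of a central arrangement of lines, hence a $\C^\times$-bundle over $\P^1$ minus finitely many points, so $P$ is a central extension of a finitely generated free group by $\Z$ and is good, whence $B$ is good; for the Shephard groups $G_{25},G_{26},G_{32}$ the braid group $B$ is isomorphic to the Artin group of type $A_3$, $B_3$, $A_4$ respectively, and these Artin groups are iterated extensions of finitely generated free groups (their Coxeter arrangements being supersolvable), hence good, so $B$ is good and $P$ is good as a finite-index subgroup.

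This leaves precisely $G_{23},G_{24},G_{27},G_{28},G_{29},G_{30},G_{31},G_{33},G_{34},G_{35},G_{36},G_{37}$, which is the real obstacle and the reason these are stated only as \emph{possible} exceptions. For none of them is a presentation of $P$ or $B$ as an iterated extension of free groups known: their reflection arrangements are not supersolvable (for instance $G_{23}=H_3$, $G_{28}=F_4$, $G_{35}=E_6$, $G_{36}=E_7$, $G_{37}=E_8$ are Coxeter of non-fiber type, and the others likewise carry no relevant fibration), so the inductive mechanism above has nothing to start from. Removing them from the statement would require either such a structural decomposition, or a direct verification that the maps $H^q(\widehat B,M)\to H^q(B,M)$ are isomorphisms for all finite $B$-modules $M$ and all $q$ -- which is exactly what remains open.
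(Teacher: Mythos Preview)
Your overall strategy---reduce to $P$, then exhibit $P$ (or $B$) as an iterated extension of free groups whenever the arrangement is fiber-type, and handle $G_{25},G_{26},G_{32}$ by identifying their braid groups with known Artin groups---is exactly the paper's. The rank-$2$ case and the $G(de,e,n)$ case with $d\ge 2$ are treated identically.

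The one soft spot is your treatment of $G(e,e,n)$ for $n\ge 3$. Invoking the Corran--Picantin presentation and its topological interpretation in \cite{HOMOLOGY} to assert that ``$X/W$ is realized through an iterated fibration'' is not a self-contained argument: a presentation (or a Garside structure) does not by itself produce a tower of fibrations with free-group fibers, and \cite{HOMOLOGY} is not set up to hand you exactly that statement. The paper closes this case cleanly without quoting anything: it writes down the explicit map
\[
X_n \longrightarrow Y,\qquad \underline{z}\ \longmapsto\ (z_1^e-z_n^e,\ \dots,\ z_{n-1}^e-z_n^e),
\]
whose target $Y=\{\underline{z}\in\C^{n-1}\mid z_i\neq 0,\ z_i\neq z_j\}$ is the fiber-type complement of type $B_{n-1}$, and whose fiber is a smooth curve. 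Then $P$ is an extension of $\pi_1(Y)$ (good, fiber-type) by a curve group (good), and the extension criterion applies. This is both shorter and more robust than routing through \cite{CORRANPICANTIN} and \cite{HOMOLOGY}; I would recommend replacing your citation-based argument with this explicit fibration. (The paper also notes an alternative: an explicit iterated free-group decomposition is recorded in \cite{BMR}, proposition 3.37.)

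A minor remark: your separate $n=2$ discussion for $G(e,e,2)=I_2(e)$ is correct but redundant, since it is already covered by your rank-$2$ argument.
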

\begin{proof}
Let $W$ be such an irreducible reflection group.
Since $P$ has finite index in $B$ the
properties for $P$ or $B$ to be good are equivalent.
We first prove that $P$ is good when $W = G(de,e,n)$.
In this case, we prove that $ P$ is actually an iterated extension
of free groups of finite rank.

When $d \neq 1$
this is the case because the corresponding hyperplane
complement $X_n = \{ \underline{z} \in \C^n | z_i \neq 0, z_j/z_i \not\in
\mu_{de}(\C) \mbox{ for } j \neq i \}$ is fiber-type in
the sense of Falk-Randell (or equivalently: supersolvable, see e.g. \cite{OT}).

We briefly recall the definition. The assertion that $X_n$
is fibertype means that there exists a fibration $X_n \to Y$ for some $Y \subset \C^{n-1}$
which is the restriction of a linear map, and such that $Y$ is an hyperplane
complement which is itself fibertype --- this inductive definition
makes sense because the dimension decreases by one, and because
in addition every hyperplane
arrangement is said to be fibertype in dimension 1. A classical consequence of
this property and of the long exact sequence of a fibration is
that (1) $X_n$ is a $K(\pi,1)$ and (2) there is a short exact
sequence $1 \to \pi_1(F) \to \pi_1(X_n) \to \pi_1(Y) \to 1$,
with the fiber $F$ being the complement in $\C$ of a finite number
of points. From this it easily follows that $\pi_1(X_n)$
is an iterated extension
of free groups of finite rank. A classical fact is
that central hyperplane complements of rank 2 (i.e.
complement of a finite collection of linear hyperplanes
in $\C^2$) are always fibertype (a convenient projection
$\C^2 \to \C \setminus \{ 0 \}$ is given by any of the linear forms
defining an hyperplane of the arrangement).  

In our case, when $d \neq 1$, the fibration is given by
$X_n \to X_{n-1}$, $\underline{z} \mapsto
(z_1,\dots,z_{n-1})$. In case $d=1$, we cannot
use the fibertype machinery. However, the hyperplane
complement $X_n = \{ \underline{z} \in \C^n | , z_j \not\in
\mu_{e}(\C)z_i \mbox{ for } j \neq i \}$ admits a fibration
$\underline{z} \mapsto (z_1^e- z_n^e,\dots,z_{n-1}^e- z_n^e)$ over the
space $Y = \{ \underline{z} \in \C^n \ |  z_i \neq 0 , z_i \neq z_j
 \mbox{ for } j \neq i \}$, which is fiber-type (consider the
natural map $\underline{z} \mapsto
(z_1,\dots,z_{n-1})$ on $Y$). 
It follows that $\pi_1(Y)$ is good
and that $Y$ is a $K(\pi,1)$, hence from the homotopy sequence of
the fibration it follows that $P$ is an extension of $\pi_1(Y)$
by the $\pi_1$ of the fiber, which is clearly a smooth algebraic curve.
The group $\pi_1(Y)$ is finitely generated
of type $FP_m$ for some $m$ (e.g. because $Y$ is a $K(\pi,1)$ with finite
cellular decomposition), hence its cohomology groups with values
in any finite module are finite. Since $\pi_1$'s of curves are good,
it follows (see \cite{SERRE}) that $P$ is good (alternatively, an explicit
decomposition as an iterated extension of free groups can be found in
\cite{BMR} proposition 3.37).

We now recall that, among the exceptional reflection groups, half of them, namely
the ones labelled $G_4$ to $G_{22}$
by Shephard and Todd \cite{ST}, have rank 2. As noticed before,
this implies that
the associated hyperplane complement is necessarily fibertype,
hence that $P$ is good.

Among the remaining groups, which are labelled $G_{24}$ to $G_{37}$
in the Shephard-Todd classification, three of them have the same braid
group than one in the infinite series. Specifically we get the same braid
group $B$ for the pairs $(G_{25},G(1,1,4))$,
$(G_{32},G(1,1,5))$,
$(G_{26},G(2,2,3))$ (see e.g. the tables of \cite{BMR}).
By the above arguments, this proves that the corresponding braid
groups $B$ are good.
There remains the list of 12 possible exceptions mentionned in the
statement.
\end{proof}

In general, the following conjecture however remains open.
\begin{conj} \label{conjgood} $B$ is good in the sense of Serre.
\end{conj}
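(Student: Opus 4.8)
By the foregoing proposition it remains to treat the twelve irreducible exceptional groups $G_{23},G_{24},G_{27},G_{28},G_{29},G_{30},G_{31},G_{33},G_{34},G_{35},G_{36},G_{37}$. Six of these are Coxeter groups --- the Artin groups of types $H_3$ ($G_{23}$), $F_4$ ($G_{28}$), $H_4$ ($G_{30}$), $E_6$ ($G_{35}$), $E_7$ ($G_{36}$), $E_8$ ($G_{37}$) --- so for them the conjecture is nothing but the (also open) assertion that every Artin group of finite type is good; the remaining six are the braid groups of the genuinely non-real groups $G_{24},G_{27},G_{29},G_{31},G_{33},G_{34}$. For each of the twelve the space $X/W$ is known to be a $K(\pi,1)$, so $B$ is finitely presented and of type $FP_{\infty}$, and by Serre's extension criterion recalled above it would be enough to produce a filtration of $B$ by normal subgroups with finitely generated kernels and good successive quotients. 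The obstruction to copying the argument used for $G(de,e,n)$ is that none of the twelve arrangements is fibre-type, so there is no tower of fibrations with good base and good fibre to induct along.

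The plan I would pursue is to replace fibrations by \emph{graph-of-groups} decompositions. Each of these braid groups contains parabolic subgroups --- the stabilisers in $B$ of the flats of the arrangement $\mathcal{A}$ --- which, by a theorem of Bessis, are themselves braid groups of complex reflection groups of strictly smaller rank, hence good by induction on the rank (the base cases and the lower-rank parabolics being good by the proposition above or by an earlier inductive step). One would try to exhibit $B$ as the fundamental group of a finite graph of such parabolics, for instance by means of an amalgam or HNN splitting produced by a generic pencil of hyperplanes, in the spirit of the morphism $f:X\to\P^1\setminus S$ used in the proof of Theorem~\ref{fidel}, and then invoke the following mechanism: the fundamental group of a finite graph of good $FP_{\infty}$ groups amalgamated along good $FP_{\infty}$ edge groups is again good, \emph{provided the splitting is efficient}, i.e.\ provided $\widehat{B}$ is the profinite fundamental group of the induced graph of profinite completions, so that the discrete and profinite Mayer--Vietoris sequences can be compared term by term via the five lemma. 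For the six Coxeter-type members one might alternatively hope for a uniform argument starting from the Salvetti complex (a finite $K(\pi,1)$) together with the Garside normal form of Deligne.

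The principal obstacle, common to both routes, is arithmetic control of $\widehat{B}$: proving that a visible splitting of $B$ is efficient requires the parabolic (edge) subgroups to be closed in the profinite topology of $B$ and the induced profinite topologies to coincide, and no such separability statements are presently available for the exceptional braid groups; likewise there is at present no fibration-free proof that, say, $A(E_8)$ is good. Short of genuinely new input of this kind --- in effect a structural description of $\widehat{B}$ going beyond what the case-by-case topological methods yield --- the statement has to remain conjectural. A clean sufficient condition that would settle it is that every complex braid group be an iterated extension of good groups --- or, geometrically, that after passing to a suitable finite covering every complex reflection arrangement complement admit a cohomologically efficient tower of fibrations --- and I would regard establishing such a uniform statement as the crux of the problem.
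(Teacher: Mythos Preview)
There is nothing to compare here: in the paper this statement is explicitly labelled a \emph{conjecture} and is introduced by the sentence ``In general, the following conjecture however remains open.'' The paper offers no proof, only the preceding proposition which settles all but the twelve listed exceptional cases.

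Your write-up is not a proof either, and you say so yourself (``the statement has to remain conjectural''). What you have produced is a survey of possible strategies --- graph-of-groups decompositions along parabolics, efficiency of splittings, the Salvetti/Garside route for the six Coxeter cases --- together with an honest account of why none of them currently goes through (lack of separability results, absence of fibre-type structure). That is a reasonable and well-informed research programme, and your identification of the six Coxeter cases $H_3,F_4,H_4,E_6,E_7,E_8$ and the six genuinely complex cases $G_{24},G_{27},G_{29},G_{31},G_{33},G_{34}$ is correct. But as a \emph{proof proposal} it contains, by your own admission, no argument that actually establishes goodness for any of the twelve outstanding groups. If the assignment was to prove the statement, the gap is simply that no proof is given; if the assignment was to assess the state of the question, your text is appropriate and matches the paper's position.
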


We recall from e.g. \cite{OT} that the Shephard groups are the symmetry groups
of regular complex polytopes. Their braid group is always isomorphic
to the braid group of another complex reflection group which belongs to the infinite series,
and which is also a Coxeter group.
Among the exceptional groups, and in rank at least $3$, these are the groups labelled $G_{25}$,
$G_{26}$ and $G_{32}$ in the Shephard-Todd classification. 
One can prove the following.

\begin{prop}
Let $W$ be an irreducible finite reflection group. The groups
$B$ and $P$ are residually finite when $W = G(de,e,n)$ for $d \neq 1$,
when $W$ is a Coxeter or a Shephard group, and when $W$ has rank 2.
\end{prop}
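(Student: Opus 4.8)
The plan is to reduce residual finiteness of $B$ to that of $P$, to treat the two fiber-type cases ($W=G(de,e,n)$ with $d\ne1$, and $W$ of rank $2$) by a soft argument on iterated semidirect products, and to treat the Coxeter case through linearity, the Shephard case reducing to it. First I would note that residual finiteness passes freely between $B$ and $P$: it is inherited by subgroups, and also by finite extensions, since if $H\le G$ has finite index and is residually finite then a nontrivial $g\in G\setminus H$ moves the coset $H$ under left translation on the finite set $G/H$, while a nontrivial $g\in H$ is moved off a finite-index subgroup $H'$ of $H$, which also has finite index in $G$; in both cases $g$ acts nontrivially on a finite coset space. As $[B:P]=|W|$, it is enough to prove that $P$ is residually finite in each of the four cases.

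For the fiber-type cases I would use the fibrations already exploited in the proof of the previous Proposition. When $W=G(de,e,n)$ with $d\ne1$, the projection $X_n\to X_{n-1}$ forgetting the last coordinate has fiber $\C$ minus $1+(n-1)de$ points, and it has the continuous section $\underline z\mapsto(\underline z,\,1+\sum_{i<n}|z_i|)$ (the last coordinate has modulus larger than every $|z_i|$, hence avoids $0$ and all $\zeta z_i$ with $\zeta\in\mu_{de}$); iterating down to $X_1=\C\setminus\{0\}$ exhibits $P$ as an iterated semidirect product of finitely generated free groups, with $\Z$ at the bottom. When $W$ has rank $2$, choosing coordinates so that one hyperplane is $\{z_1=0\}$ and projecting $(z_1,z_2)\mapsto z_1$ yields a fibration over $\C\setminus\{0\}$ whose fiber is $\C$ minus finitely many points and which has the continuous section $t\mapsto(t,ct)$ for any $c$ avoiding the finitely many slopes of the other hyperplanes, so $P\cong F_k\rtimes\Z$. (Continuous rather than algebraic sections suffice, since only $\pi_1$ is at issue.) It then remains to prove the elementary lemma that $G=N\rtimes Q$ is residually finite whenever $N$ is finitely generated and residually finite and $Q$ is residually finite: given $1\ne g=nq$, if $q\ne1$ map to $Q$ and then to a finite quotient; if $g=n\in N\setminus\{1\}$, pick a \emph{characteristic} subgroup $N_0\triangleleft N$ of finite index with $n\notin N_0$ (there being only finitely many subgroups of each index), so that $N_0\triangleleft G$ and $G/N_0=(N/N_0)\rtimes Q$ contains $(N/N_0)\times Q_1$ with finite index, where $Q_1$ is the finite-index kernel of the action of $Q$ on the finite group $N/N_0$; this last group is residually finite, hence so is the finite extension $G/N_0$, and $n$ survives there and hence in a finite quotient of $G$. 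Applying the lemma up the tower gives residual finiteness of $P$; alternatively one may quote that the fundamental group of a fiber-type arrangement is residually torsion-free nilpotent (see \cite{OT}).

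When $W$ is a Coxeter group, $B$ is an Artin--Tits group of spherical type, hence linear over a field of characteristic zero via the Lawrence--Krammer representation (Bigelow, Krammer, Digne, Cohen--Wales), hence residually finite by Mal'cev's theorem; therefore $P$ is as well. (In type $A$ one can instead recall that $B$ is the mapping class group of a punctured disk and invoke Grossman's residual finiteness of mapping class groups.) Finally, if $W$ is a Shephard group then, as recalled just above, $B$ is isomorphic to the braid group of a complex reflection group which is at once a Coxeter group and a member of the infinite series; the Coxeter case --- or, when that group is some $G(de,e,n)$ with $d\ne1$, the fiber-type case --- then applies, giving residual finiteness of $B$, and hence of $P$, for the original $W$. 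The only genuinely non-elementary ingredient is the linearity of spherical Artin groups used here; apart from that, the one point requiring a little care is that the fibrations in the fiber-type cases carry only continuous sections, which does no harm since residual finiteness depends only on $\pi_1$.
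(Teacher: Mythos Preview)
Your proof is correct and follows the same overall case division as the paper: reduce to $P$, handle the fiber-type cases (the $G(de,e,n)$ with $d\neq 1$ and the rank-$2$ groups) via the structure of $\pi_1$ of a fiber-type arrangement, and handle the Coxeter (hence Shephard) case via linearity and Mal'cev's theorem.

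The one genuine difference is in the fiber-type part. The paper simply invokes the theorem of Falk and Randell \cite{FALKDEUX} that the $\pi_1$ of any fiber-type hyperplane complement is residually torsion-free nilpotent, and deduces residual finiteness from that stronger property. You instead supply a self-contained argument: you exhibit continuous sections of the fibrations to decompose $P$ as an iterated semidirect product of finitely generated free groups, and then prove the elementary lemma that a semidirect product $N\rtimes Q$ with $N$ finitely generated residually finite and $Q$ residually finite is itself residually finite. Your route is more elementary and avoids citing the Falk--Randell result (which uses the lower central series structure of fiber-type $\pi_1$'s), at the price of a short extra computation; the paper's route is shorter but imports a stronger theorem. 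Both are fine. One minor remark: your parenthetical citation for the alternative residually-torsion-free-nilpotent argument should be to Falk--Randell rather than to \cite{OT}.
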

\begin{proof}
Again, the statements for $P$ and $B$ are equivalent.
In case $W = G(de,e,n)$ for $d \neq 1$ or $W$ has rank 2, then
$P$ is residually finite because it is residually torsion-free
nilpotent ; indeed, this is the case for the $\pi_1$ of an arbitrary fibertype hyperplane complement
(see \cite{FALKDEUX}).
In case $W$ is a Coxeter group (and thus also when $W$ is a Shephard group) this is a consequence of the linearity
of $B$, proved by F. Digne and A. Cohen - D. Wales (see \cite{DIGNE,COHENWALES}),
since finitely generated linear groups are residually finite.
\end{proof}

In the remaining cases, the groups $P$ are conjectured to be
residually torsion-free nilpotent,
and the groups $B$
to be linear -- these both statements being potentially connected,
as shown in \cite{KRAMCRG} and \cite{RESFREE}, and both of them implying the residual finiteness of $B$.
The following conjecture is thus highly plausible.

\begin{conj} $B$ is residually finite.
\end{conj}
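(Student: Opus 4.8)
The plan is to reduce the conjecture to the pure braid group and then treat the cases left open by the previous proposition. Since $P=\pi_1(X)$ is a finite-index normal subgroup of $B=\pi_1(X/W)$, and residual finiteness is inherited by finite overgroups (pass to the core $N$ of $P$ in $B$, which is normal of finite index and contained in $P$ hence residually finite, and use that the finitely generated group $N$ has characteristic subgroups of any prescribed finite index), it suffices to prove that $P$ is residually finite. By the previous proposition the only irreducible $W$ still to consider are $G(e,e,n)$ with $e\geq 3$, $n\geq 3$, together with the exceptional groups $G_{24},G_{27},G_{29},G_{31},G_{33},G_{34}$; the other rank-$\geq 3$ exceptions are either Coxeter groups, where $B$ is linear by \cite{DIGNE,COHENWALES}, or Shephard groups, whose braid groups occur in the infinite series. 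For the infinite series I would aim at residual torsion-free nilpotence of $P$, and for the six exceptional groups at linearity of $B$; either property implies residual finiteness, and each is predicted by the conjectures recalled above (\cite{KRAMCRG,RESFREE}).

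For $W=G(e,e,n)$ with $d=1$, I would start from the fibration already used in the proof of the goodness proposition: the map sending a point of $\C^n$ to $(z_1^e-z_n^e,\dots,z_{n-1}^e-z_n^e)$ exhibits $X$ as a fibre bundle over the fibertype hyperplane complement $Y$, with fibre $F$ the complement of finitely many points in $\C$. Then $\pi_1(Y)$ is residually torsion-free nilpotent (Falk--Randell for fibertype arrangements, see \cite{FALKDEUX}) and $\pi_1(F)$ is free, so $P$ will be residually torsion-free nilpotent \emph{provided} the extension $1\to\pi_1(F)\to P\to\pi_1(Y)\to 1$ is almost direct, i.e. the monodromy of $\pi_1(Y)$ acts trivially on $H_1(F;\Z)$, after which one bootstraps through the lower central series by the standard filtration argument. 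The subtle point --- and exactly the reason the supersolvability machinery fails when $d=1$ --- is that the monodromy is not visibly trivial on $H_1(F)$, so this has to be computed; I would do so using the Corran--Picantin generators $t_i,s_3,\dots,s_n$ together with the topological model of \cite{HOMOLOGY}. If the action turns out not to be unipotent on $H_1$, this route must be replaced by a linearity argument, for instance an embedding of $B(G(e,e,n))$ into a finite-type or affine-type Artin braid group, should such an embedding exist.

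For the six exceptional groups there is no fibration to exploit, and I would go directly for linearity of $B$, since a finitely generated linear group in characteristic $0$ is residually finite (Mal'cev). The natural candidate representations are the Lawrence--Krammer--Bigelow type representations of complex braid groups coming from the cyclotomic Hecke (BMR) algebras (see \cite{KRAMCRG}), whose dimensions are now under control since the BMR freeness conjecture is a theorem; one realizes such a representation homologically on a local system over a configuration-type space attached to the reflection arrangement of $W$, and then has to prove it faithful. \emph{Faithfulness is the main obstacle}: already for the classical braid groups this was the difficult content of the theorems of Bigelow and of Krammer, and for the exceptional complex braid groups the relevant machinery (forking configuration spaces, noodle--fork pairings) is not yet available; absent a uniform statement, one might attack the six groups one at a time through their explicit presentations. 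To summarize, granting the $H_1$-monodromy computation for the $G(e,e,n)$ series and faithfulness of a Krammer-type representation for $G_{24},G_{27},G_{29},G_{31},G_{33},G_{34}$, the group $P$, and hence $B$, is residually finite; more modestly, the conjecture follows at once from either the linearity conjecture for $B$ or the residual torsion-free nilpotence conjecture for $P$ stated above.
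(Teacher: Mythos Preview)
The statement you are trying to prove is labelled a \emph{conjecture} in the paper, and the paper offers no proof of it. Immediately before stating the conjecture, the paper records exactly the reduction you end with: in the remaining cases $P$ is conjectured to be residually torsion-free nilpotent and $B$ to be linear, either of which would imply residual finiteness. So there is no ``paper's own proof'' to compare against; the conjecture is left open.

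Your proposal is not a proof either, and to your credit you say so: the $G(e,e,n)$ argument hinges on the monodromy of $\pi_1(Y)$ acting trivially on $H_1(F;\Z)$, which you do not establish (and which is precisely why the fibertype/Falk--Randell machinery does not apply directly when $d=1$); the exceptional cases hinge on faithfulness of a Krammer-type representation, which you identify as the main obstacle and which is open. Thus the substance of your write-up is the last sentence --- the conjecture follows from either the linearity conjecture for $B$ or the residual torsion-free nilpotence conjecture for $P$ --- and that is already the content of the paragraph in the paper preceding the conjecture. Your reduction from $B$ to $P$ via finite-index overgroups is fine, and your enumeration of the outstanding irreducible types ($G(e,e,n)$ with $e\geq 3$, $n\geq 3$, and $G_{24},G_{27},G_{29},G_{31},G_{33},G_{34}$) is correct, but nothing beyond that goes further than what the paper states as open.
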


Another group-theoretic question concerns the centre of these group and of their profinite
completion. It was conjectured in \cite{BMR} that, when $W$ is irreducible, $Z(P)$ and $Z(B)$ were infinite cyclic,
generated by natural elements denoted $\pi$ and $\beta$, and that
they are related to each other through an exact sequence $1 \to Z(P) \to Z(B) \to Z(W) \to 1$. This conjecture
has been recently proved (see \cite{DIGNEMARINMICHEL}). In addition, we proved in \cite{DIGNEMARINMICHEL}
that, whenever $U$ is a finite index subgroup of $B$, one has $Z(U) \subset Z(B)$.
This raises the following natural question :

\begin{quest} Is $Z(\hat{B})$ (respectively $Z(\hat{P})$) freely generated by  $\beta$ (resp. $\pi$) ? Do we
have a short exact sequence $1 \to Z(\hat{P}) \to Z(\hat{B}) \to Z(W) \to 1$ ?
\end{quest}

Finally, from the point of view of capturing the image of $G(\Q)$ inside $\Out \hat{B}$, one would like to know the following.

\begin{quest} \label{questcoinc} Which reflection groups $W$ actually
provide distinct (non-isomorphic) braid groups $B$, and more importantly distinct
profinite groups $\widehat{B}$ ?
\end{quest}

A partial answer has been
obtained in \cite{HOMOLOGY}, for the case of reflection groups
having one conjugacy class of reflections : the map $W \mapsto B$
is injective on irreducible 2-reflection groups with one class of reflections.
Actually the methods used there are either cohomological
or depend only on $\widehat{B}$, so it also proves the
injectivity of $W \mapsto \hat{B}$ under the goodness assumption (conjecture \ref{conjgood} above).

\section{Grothendieck-Teichm\"uller actions}

A few years ago, P. Lochak asked the following question, in the setting
of real reflection groups. It now seems natural to extend this question to complex reflection groups.
\begin{quest} For $W$ a complex reflection group,
is there an action of $\GTH$ on $\widehat{B}$ which extends
an action of $G(\Q)$ ? Or of $\GGam$, $\GGam'$,\dots ?
\end{quest}

Here $\GTH$ denotes the usual Grothendieck-Teichm\"uller, whose definition we recall below, and $\GGam$, $\GGam'$ denote
subgroups of $\GTH$ (whose equality with $\GTH$ is an open question) which act on the full
tower of mapping class groups in all genus. These groups have been defined in \cite{NAKASCH}.

In this section we shall explore this question (without answering it in any way)
in the general setting of complex reflection groups,
and we shall try to distinguish a few maybe more accessible subquestions.

\subsection{The Grothendieck-Teichm\"uller group}

We denote $Br_n$ the usual braid group on $n$ strands, with
generators $s_1,\dots,s_{n-1}$, and denote $\widehat{GT}$
the (profinite version of) the Grothendieck-Teichm\"uller group,
introduced by Drinfeld in \cite{DRINFELD}. Recall that the elements
of $\widehat{GT}$ are couples $(\la, f) \in \widehat{\Z}^{\times}
\times \widehat{F}_2$, where $F_2$ denotes the free group
of rank 2, and that there exists an embedding $G(\Q) \into \widehat{GT}$,
$\sigma \mapsto (\chi(\sigma),f_{\sigma})$ with $\chi : G(\Q) \to \widehat{\Z}^{\times}$
the cyclotomic character.

There is a natural (Drinfeld) action of $\widehat{GT}$ on $\widehat{Br_{n}}$
defined in \cite{DRINFELD}, that associates
to $(\la,f) \in \widehat{GT}$ the automorphism $s_1 \mapsto s_1^{\la}$,
$s_i \mapsto f(s_i^2,y_i) s_i^{\la} f(y_i,s_i^2)$.

\subsection{A remark on the derived subgroup}

One of the properties of the couples $(\la,f) \in \widehat{GT}$ is
that $f \in \widehat{F}_2$ actually belongs to the derived subgroup
$(\widehat{F}_2,\widehat{F}_2)$. Here, we let $(G,G)$
denote the (algebraic) derived subgroup of the group $G$, that
is the subgroup algebraically generated by the commutators
$(a,b) = a b a^{-1} b^{-1}$ for $a,b \in G$. Note that, for
a finitely generated group $G$, there are a priori two natural
notions for the `derived subgroup' of $\widehat{G}$. Letting
$p : G \onto G^{ab}$ denote the abelianization morphism,
and $\widehat{p} : \widehat{G} \onto \widehat{G^{ab}}$ the induced
morphism, $\Ker \widehat{p}$ equals the closure $\overline{(G,G)}$
of (the image of) $(G,G)$ in $\widehat{G}$. Of course $\widehat{G^{ab}}$
is abelian, hence $(\widehat{G},\widehat{G}) \subset \Ker \widehat{p} = 
\overline{(G,G)}$, and clearly $(G,G) \subset (\widehat{G},\widehat{G})$.
When $G$ is finitely generated, by the recent result of \cite{NIKOLO} theorem
1.4, $(\widehat{G},\widehat{G})$ is closed hence $(\widehat{G},\widehat{G})
= \overline{(G,G)} = \Ker \widehat{p}$.
Moreover, 
$G^{ab}$ being a finitely generated abelian group is good,
hence the
sequence $1 \to \widehat{(G,G)} \to \widehat{G} \to \widehat{G^{ab}}\to 1$
is exact. In our case $G = F_2$ we thus get
$\widehat{(F_2,F_2)} \simeq \overline{(F_2,F_2)} = 
(\widehat{F}_2,\widehat{F}_2)$

\subsection{Artin group of type $B_n$ and the groups $G(de,e,n)$, $d > 1$}

For the complex reflection groups $G(de,e,n)$ with $d \neq 1$,
the corresponding braid group is a finite index subgroup of the Artin group
$Art(B_n)$ of type $B_n$ (see \cite{BMR} proposition 3.8). Recall that this group is generated by elements $t, \ss_2,
\ss_3, \dots, \ss_{n}$ with usual braid relations between the $s_i$'s, and $ts_2ts_2 = s_2 t s_2 t$,
$t s_i = s_i t$ for $i \geq 3$. This group can be embedded
in $Br_{n+1}$ through $t \mapsto s_1^2$, $\ss_i \mapsto s_i$.
We denote the image subgroup by $Br_{n+1}^1$. It has finite index $n+1$
in $Br_{n+1}$, being the inverse image in $Br_{n+1}$ of
the permutations fixing $1$. This implies that $Br_{n+1}^1$
is good, and also that the closure of $Br_{n+1}^1$ in $\widehat{Br_{n+1}}$
can be identified with $\widehat{Br_{n+1}^1}$. From
the formulas defining the $\widehat{GT}$ action on $Br_{n+1}$
it is then clear that $\widehat{Br_{n+1}^1}$ is stabilized.
This action of $\widehat{GT}$ is compatible with the morphism
$G(\Q) \into \widehat{GT}$ and with the action of $G(\Q)$ on
$\widehat{\pi}_1(X/W)$, for $W$ the Coxeter group of type
$B_n$, with respect to a natural tangential base-point (see \cite{MATSUARTIN}).
Related results on $Art(B_n) \simeq Br_{n+1}^1$ can be found in
\cite{LS}.

When $W = G(de,e,n)$ with $d > 1$, the space $X$ depends
only on $de$ and $n$, while $X/W$ depends only on $e$ and $n$.
The corresponding braid group $B$ is then the kernel
of $Art(B_n) \onto \Z/e$ defined by $t \mapsto 1$, $\ss_i \mapsto 0$,
and as before $\widehat{B}$ can be identified with
the kernel of the induced morphism $\widehat{Art(B_n)} \onto \Z/e$.
Clearly the action of $\widehat{GT}$ on $\widehat{Art(B_n)}$
stabilizes this kernel, hence an action of $\widehat{GT}$
on $\widehat{B}$.

\subsection{Artin group of type $D_n$ and the groups $G(e,e,n)$}

\subsubsection{On the Artin group of type $D_n$}

The Artin group of type $D_n$ is generated by elements $\ss_1, \ss'_1,\ss_2,
\dots,\ss_{n-1}$ with both $\ss_1,\ss_2,\dots,\ss_{n-1}$
and $\ss'_1,\ss_2,\dots,\ss_{n-1}$ satisfying the relations
of $Br_n$, plus $\ss_1 \ss'_1 = \ss'_1 \ss_1$. When $2 \leq r \leq n$
we identify $Art(D_r)$ with the subgroup of $Art(D_n)$
generated by $\ss_1,\ss'_1,\ss_2,\dots,\ss_{r-1}$.

We let $w_r = \ss_1 \ss'_1 \ss_2 \dots \ss_r$ when $r \geq 2$.
The Garside element $\Delta_r \in Art(D_{r+1})$ is $w_r^{r-1}$. It
is central in $Art(D_{r+1})$ when $r$ is even, and in all cases $\Delta_r^2$ is
central in $Art(D_r)$. When $r$ is odd, conjugation by $\Delta_r$
in $Art(D_{r+1})$ induces the `diagram' automorphism $\phi$ which exchanges
$\ss_1$ with $\ss'_1$ and fixes the $\ss_i$ for $i > 1$. We let $\eta_r
 = \ss_{r-1} \dots \ss_2 \ss_1 \ss'_1 \ss_2 \dots \ss_{r-1}$.

The proof of the following technical lemma is easy by induction,
and left to the reader. The items (1) and (2) only make easier
the proof of (3), which is the item we need for the sequel.
\begin{lemma} \label{lemcalcD}
\begin{enumerate}
\item $\forall r \geq 3 \ \ w_{r+1} \ss_{r-1} = \ss_r w_{r+1} $
\item $\forall r > i > 2 \ \ w_{r+1} \ss_{r-1} \dots \ss_i = \ss_r \dots
\ss_{i+1} w_{r+1}$
\item $\forall r \geq 2 \ \ \Delta_r \eta_{r+1} = \Delta_{r+1}$.
\end{enumerate}
\end{lemma}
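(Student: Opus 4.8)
The plan is to establish (1) and (2) by direct manipulation with the braid and commutation relations, and then to obtain (3) from them by an induction on the exponent. Throughout I use the labelling in which $\mathrm{Art}(D_{r+1})$ has generators $s_1,s_1',s_2,\dots ,s_r$, so that the Coxeter element is $w_{r+1}=s_1s_1's_2\cdots s_r$ and $w_{r+1}=w_rs_r$; I write $w_r=u_rs_{r-1}$ with $u_r=s_1s_1's_2\cdots s_{r-2}$, and I use freely that $s_r$ commutes with every generator of index $\le r-2$ (in particular with $s_1$ and $s_1'$).

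For (1): in the positive word $w_{r+1}s_{r-1}$ the trailing $s_{r-1}$ meets the terminal $s_r$ of $w_{r+1}$ and is absorbed by the single braid relation $s_{r-1}s_rs_{r-1}=s_rs_{r-1}s_r$; this leaves a stray $s_r$ in front of $u_rs_{r-1}$, and since $s_r$ commutes through all of $u_r$ it slides to the far left, reconstituting $w_{r+1}$. Hence $w_{r+1}s_{r-1}=s_rw_{r+1}$, and $r\ge 3$ is exactly the hypothesis needed to make room for this move. For (2): I would first prove by the identical one-step computation the individual shift relations $w_{r+1}s_j=s_{j+1}w_{r+1}$ for $2\le j\le r-1$ (apply $s_js_{j+1}s_j=s_{j+1}s_js_{j+1}$, then commute $s_{j+1}$ past the head $s_1s_1's_2\cdots s_{j-1}$ of $w_{r+1}$), and then (2) is just the concatenation of these for $j=i,i+1,\dots ,r-1$; the hypothesis $i>2$ is only for convenience, the case $i=2$ being covered by the same relations.

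For (3) I would prove, by induction on $k$ in the range $1\le k\le r-1$, the auxiliary identity
$$w_{r+1}^{\,k}=w_r^{\,k}\,\bigl(s_rs_{r-1}\cdots s_{r-k+1}\bigr),$$
the case $k=1$ being just $w_{r+1}=w_rs_r$. For the step from $k$ to $k+1$ one multiplies on the right by $w_{r+1}=w_rs_r$ and carries the new factor $w_r$ leftwards through the string $s_rs_{r-1}\cdots s_{r-k+1}$: the block $s_{r-1}s_{r-2}\cdots s_{r-k+1}$ crosses $w_r$ by the shift relations (1) and (2) applied with $r-1$ in place of $r$, each index dropping by one, while the leading $s_r$ and the trailing $s_r$ are rearranged using $s_rw_r=u_rs_rs_{r-1}$, the commutations of $s_r$ with lower generators, and the braid relation $s_rs_{r-1}s_r=s_{r-1}s_rs_{r-1}$, with the effect that a fresh $s_{r-1}$ reattaches to $u_r$ and rebuilds $w_r$ while a string $s_rs_{r-1}\cdots s_{r-k}$ is left trailing; this yields exactly $w_{r+1}^{\,k+1}=w_r^{\,k+1}\bigl(s_rs_{r-1}\cdots s_{r-k}\bigr)$. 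Specialising to $k=r-1$ gives $w_{r+1}^{\,r-1}=w_r^{\,r-1}\bigl(s_rs_{r-1}\cdots s_2\bigr)$; multiplying once more by $w_{r+1}$ and noting that $\bigl(s_rs_{r-1}\cdots s_2\bigr)w_{r+1}$ is, letter for letter, the word $\eta_{r+1}$, this becomes $w_{r+1}^{\,r}=w_r^{\,r-1}\eta_{r+1}$, that is $\Delta_{r+1}=\Delta_r\eta_{r+1}$. (For $r=2$ everything degenerates to the plain word equality $(s_1s_1')\bigl(s_2s_1s_1's_2\bigr)=(s_1s_1's_2)^2$.)

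The only point that I expect to require genuine care---and this is why (1) and (2) are singled out first---is the behaviour at the two ends of the index range. The shift relation $w_rs_j=s_{j+1}w_r$ is clean only for $2\le j\le r-2$, so the top generator $s_r$, which lies one notch beyond it, has to be dealt with by hand each time a $w_r$ crosses it, this being precisely the role of the braid relation $s_rs_{r-1}s_r=s_{r-1}s_rs_{r-1}$ together with the commutations of $s_r$; and one must not overlook the bottom case $j=2$, which needs its own (equally short) verification. Beyond this, (3) is bookkeeping: one must keep exact track, at each stage of the induction on $k$, of the descending string of generators trailing $w_r^{\,k}$, so that after $r-1$ steps it is precisely $s_rs_{r-1}\cdots s_2$, the piece that combines with $w_{r+1}$ to produce $\eta_{r+1}$.
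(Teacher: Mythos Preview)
Your argument is correct and follows precisely the route the paper indicates: the paper does not write out a proof, saying only that it is ``easy by induction'' and that items (1) and (2) ``only make easier the proof of (3)''. Your auxiliary identity $w_{r+1}^{\,k}=w_r^{\,k}(s_r s_{r-1}\cdots s_{r-k+1})$, established from the shift relations and the single braid move $s_r s_{r-1} s_r = s_{r-1} s_r s_{r-1}$, together with the word identity $(s_r\cdots s_2)\,w_{r+1}=\eta_{r+1}$, is exactly the bookkeeping one expects here; the indexing you adopt (so that $w_{r+1}=s_1 s_1' s_2\cdots s_r$ and $\Delta_r=w_r^{\,r-1}$ is the Garside element of $\mathrm{Art}(D_r)$) is the one under which the lemma holds.
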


The following lemma is then an easy consequence of item (3) above and
of the properties of $\Delta_r$ recalled earlier.

\begin{lemma}
\begin{enumerate} \label{lemcommD}
\item If $x \in Art(D_r)$ centralizes $\ss_1,\ss'_1$, then so
does $(\eta_r^m,x)$ for all $m \in \Z$, whenever $r \geq 3$.
\item If $f \in (\widehat{F}_2,\widehat{F}_2)$ and $r \geq 3$,
then $f(\eta_r,\ss_r^2)$ centralizes $\ss_1,\ss'_1$ in $\widehat{Art(D_r)}$.
\end{enumerate}
\end{lemma}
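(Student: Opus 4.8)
The plan is to reduce everything to one elementary fact about $\eta_r$: for $r \geq 3$, conjugation by $\eta_r$ stabilises the rank-two parabolic subgroup $W_0 = \langle \ss_1, \ss'_1 \rangle$, and in fact simply interchanges $\ss_1$ and $\ss'_1$. To see this I would apply item (3) of Lemma~\ref{lemcalcD} with $r$ replaced by $r-1$ (legitimate since $r \geq 3$), which gives $\Delta_{r-1}\eta_r = \Delta_r$, i.e.\ $\eta_r = \Delta_{r-1}^{-1}\Delta_r$. By the properties of the $\Delta_k$ recalled above, conjugation by $\Delta_{r-1}$ (resp.\ by $\Delta_r$) is either the identity or the diagram automorphism $\phi$, according to the parity of $r-1$ (resp.\ of $r$), and in either case it carries $\{\ss_1, \ss'_1\}$ onto itself; hence so does conjugation by $\eta_r = \Delta_{r-1}^{-1}\Delta_r$. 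Since $r-1$ and $r$ have opposite parities, exactly one of these two conjugations is $\phi$ and the other is trivial, so conjugation by $\eta_r$ is exactly $\phi$ restricted to $W_0$, that is, the transposition of $\ss_1$ and $\ss'_1$.

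For part (1): the hypothesis on $x$ says that conjugation by $x$ induces the identity on $W_0$. Both $\eta_r^m$ and $x$ normalise $W_0$, hence so does $(\eta_r^m, x)$, and the automorphism of $W_0$ it induces is the commutator, inside $\Aut(W_0)$, of the automorphism induced by $\eta_r^m$ (namely $\phi^m|_{W_0}$) and the one induced by $x$ (namely the identity). A commutator involving the identity is the identity, so $(\eta_r^m, x)$ induces the identity on $W_0$, i.e.\ centralises $\ss_1$ and $\ss'_1$. (One may also simply expand $(\eta_r^m,x)\,\ss_1\,(\eta_r^m,x)^{-1}$ and the analogous expression for $\ss'_1$, using that $x$ commutes with $\ss_1,\ss'_1$ and that $\eta_r^{\pm m}$ fixes or swaps them according to the parity of $m$; each case is a one-line calculation.)

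For part (2): when $r \geq 3$, the node $r$ of the Coxeter diagram of $D_{r+1}$ is adjacent only to the node $r-1$, so $\ss_r$ — hence $\ss_r^2$ — commutes with $\ss_1$ and with $\ss'_1$; thus conjugation by $\ss_r^2$ induces the identity on $W_0$. Together with the opening observation, conjugation restricted to $W_0$ gives a continuous homomorphism from the closed subgroup $H \subseteq \widehat{Art(D_{r+1})}$ topologically generated by $\eta_r$ and $\ss_r^2$ into $\Aut(W_0)$, whose image is topologically generated by the single element $\phi|_{W_0}$ and is therefore abelian. The substitution $u \mapsto \eta_r$, $v \mapsto \ss_r^2$ induces a continuous surjection $\widehat{F}_2 \onto H$ under which $f(\eta_r, \ss_r^2)$ is the image of $f$. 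Now $f \in (\widehat{F}_2, \widehat{F}_2) = \Ker(\widehat{F}_2 \onto \widehat{F_2^{ab}})$ — the identification recalled in the preceding subsection — and any continuous homomorphism from $\widehat{F}_2$ to an abelian group annihilates that kernel; hence the composite $\widehat{F}_2 \to \Aut(W_0)$ kills $f$, which is precisely the statement that $f(\eta_r, \ss_r^2)$ centralises $\ss_1$ and $\ss'_1$.

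I do not expect a genuine obstacle here — this is why the authors call it an easy consequence. The only step that really uses the inputs (Lemma~\ref{lemcalcD}(3), the facts about the $\Delta_k$, and the hypothesis $r \geq 3$) is the opening claim that conjugation by $\eta_r$ stabilises $\{\ss_1, \ss'_1\}$; everything afterwards is formal, being elementary commutator bookkeeping for (1) and, for (2), only the already-established description of $(\widehat{F}_2, \widehat{F}_2)$ as the kernel of abelianisation. The two points needing minor care are purely bookkeeping: $\ss_r$ lives in $Art(D_{r+1})$ rather than $Art(D_r)$, so (2) is really a statement in $\widehat{Art(D_{r+1})}$ (equivalently in $\widehat{Art(D_n)}$ for any $n \geq r+1$); and one uses that $\ss_1 \neq \ss'_1$ still holds in the profinite completion, which is clear since $Art(D_{r+1})$ is residually finite.
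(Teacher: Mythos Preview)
Your argument is correct. For part (1) you proceed essentially as the paper does: both of you use Lemma~\ref{lemcalcD}(3) to write $\eta_r=\Delta_{r-1}^{-1}\Delta_r$ and deduce that conjugation by $\eta_r$ acts as the diagram involution $\phi$ on $\{\ss_1,\ss'_1\}$, after which the commutator calculation (which the paper writes out explicitly and you also sketch) is immediate. For part (2) your route genuinely differs. The paper proves the discrete statement by an induction on the length of iterated commutators in powers of $\eta_r$ and $\ss_r^2$, invoking part (1) at each step to push the new commutator into the centraliser, and only then takes the closure in $\widehat{F}_2$. You instead observe once and for all that conjugation by elements of $\langle \eta_r,\ss_r^2\rangle$ permutes the two-element set $\{\ss_1,\ss'_1\}$, so the whole conjugation action factors through $\Z/2$; any map from $\widehat{F}_2$ to an abelian target kills $(\widehat{F}_2,\widehat{F}_2)=\Ker(\widehat{F}_2\onto\widehat{F_2^{ab}})$, and you are done. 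This is cleaner and avoids the induction entirely, while resting on exactly the same three inputs (the opening fact about $\eta_r$, the commutation of $\ss_r$ with $\ss_1,\ss'_1$ for $r\geq 3$, and the description of $(\widehat{F}_2,\widehat{F}_2)$ recalled earlier). Your closing remark that the ambient group ought to be $\widehat{Art(D_{r+1})}$ rather than $\widehat{Art(D_r)}$, since $\ss_r\notin Art(D_r)$, is correct and catches a slip in the paper's statement; and the separation $\ss_1\neq\ss'_1$ in the profinite completion follows already from the finite quotient $W(D_{r+1})$, so residual finiteness is more than needed there.
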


\begin{proof}
(1) By lemma \ref{lemcalcD} (3) we have $\eta_r = \Delta_{r-1}^{-1} \Delta_r$,
hence $\eta_r y \eta_r^{-1} = \phi(y)$ for all $y$.  Note that $\phi^2 = \mathrm{Id}$ and
thus $\phi(y) = \eta_r^{-1} y \eta_r$.

It follows that,
for $y \in \{ \ss_1, \ss'_1 \}$ and $m \in \Z$,
$$
\begin{array}{lclcl}
(\eta_r^m,x) y &=& \eta_r^m x \eta_r^{-m} x^{-1} y &=& \eta_r^m x \eta_r^{-m} y x^{-1} \\
&=& \eta_r^m x \eta_r^{-m} y \eta_r^m \eta_r^{-m} x^{-1} &=& \eta_r^m x \phi^m (y) \eta_r^{-m} x^{-1} \\
&=& \eta_r^{m}\phi^m(y) x \eta_r^{-m}x^{-1} &=& \eta_r^{m} \phi^m(y) \eta_r^{-m}(\eta_r^{m},x) \\
&=& \phi^{2m}(y) (\eta_r^{m},x) &=& y (\eta_r^{m},x) 
\end{array}
$$
(2) We denote $C$ the centralizer in $Art(D_r)$
of $< \ss_1, \ss'_1 >$. We let $H_k$ denote the subgroup of $(F_2,F_2)$
generated by the commutators of length at most $k$ in
powers of $\ss_r^2$ and $\eta_r$, and
$\varphi : (F_2,F_2) \to Art(D_r)$ defined
by $f \mapsto f(\eta_r,\ss_r^2)$. By induction on $k \geq 2$,
we show that $\varphi(H_k) \subset C$. The case $k = 2$
is a consequence of (1), as $(\eta_r^u, \ss_r^v) \in C$ for all $u,v \in \Z$ by (1). 
Let now $c$ be (the image under $\varphi$ of) a commutator of length $k+1$. Either $c = (u,v)$
with $u,v \in H_k \subset C$, and then $c \in C$, or
$c = (\ss_r^{2m}, v)$ for some $m \in \Z \setminus \{ 0 \}$ with $v \in H_k \subset C$ and then $c \in C$
because $\ss_r^2 \in C$, or $c = (\eta_r^m, v)$ for some $m \in \Z \setminus \{ 0 \}$ with $v \in H_k \subset C$,
and then $c \in C$ by (1). This proves that
$\varphi((F_2,F_2)) \subset C$, that is $\varphi(f) \ss_1 = \ss_1 \varphi(f)$,
$\varphi(f) \ss'_1 = \ss'_1 \varphi(f)$ for all $f \in (F_2,F_2)$,
hence $f(\eta_r,\ss_r^2)$ centralizes $\ss_1, \ss'_1$ in $\widehat{Art(D_r)}$
for all $f$ in the closure of $(F_2,F_2)$ in $\widehat{F}_2$.
\end{proof}

\subsubsection{Lifting the Galois action for $D_n$ to $\widehat{GT}$.}

In \cite{MATSUARTIN}, Matsumoto showed that, for $W = D_n = G(2,2,n)$,
the action of $G(\Q)$ on $X$ with respect to a natural tangential basepoint
can be expressed with
the formulas $\ss_1 \mapsto \ss_1^{\la}, \ss'_1 \mapsto (\ss'_1)^{\la}$,
$\ss_i \mapsto f(\ss_i^2,\eta_i) \ss_i^{\la} f(\eta_i,\ss_i^2)$
for $i \geq 2$, with $(\la,f)$ the image in  $\widehat{GT}$ of the
element of $G(\Q)$. The question is thus to check whether the morphism $F$
from the free group on $\ss_1,\ss'_1,\ss_2, \dots, \ss_{n-1}$
to $\widehat{Art(D_n)}$ defined by these equations, for an arbitrary
$(\la,f) \in \widehat{GT}$, factors through the natural morphisms
$F \to Art(D_n)$ and $Art(D_n) \to \widehat{Art(D_n)}$. Using the previous results we find that the \emph{commutation
relations} are satisfied.
\begin{prop}
$F(\ss_i) F(\ss_j) = F(\ss_j) F(\ss_i)$ when $|j-i| \geq 2$
and $F(\ss_1') F(\ss_j) = F(\ss_j) F(\ss_1')$ when $j \geq 3$
\end{prop}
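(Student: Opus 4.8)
The plan is to verify the two families of commutation relations separately, in each case reducing to a statement that has already been established. Recall that $F(\ss_1) = \ss_1^\la$, $F(\ss_1') = (\ss_1')^\la$, and for $i \geq 2$, $F(\ss_i) = f(\ss_i^2,\eta_i)\,\ss_i^\la\, f(\eta_i,\ss_i^2)$, where $\eta_i = \ss_{i-1}\dots\ss_2\ss_1\ss_1'\ss_2\dots\ss_{i-1}$.

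First I would treat the relation $F(\ss_1')F(\ss_j) = F(\ss_j)F(\ss_1')$ for $j \geq 3$. Since $F(\ss_1') = (\ss_1')^\la$, it suffices to show that $F(\ss_j)$ centralizes $\ss_1'$ in $\widehat{Art(D_n)}$. Now $F(\ss_j)$ is a word in $\ss_j$, $\ss_j^{\pm 1}$ and in $\eta_j$, $\eta_j^{\pm 1}$ (via the substitutions $\ss_j^2$ and $\eta_j$ into $f$). The generator $\ss_j$ commutes with $\ss_1'$ since $j \geq 3$ (this is one of the defining relations of $Art(D_n)$, the nodes being non-adjacent in the Dynkin diagram). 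For $\eta_j$ the point is exactly Lemma \ref{lemcommD}: the element $f(\eta_j,\ss_j^2)$, with $f \in (\widehat{F}_2,\widehat{F}_2)$, centralizes both $\ss_1$ and $\ss_1'$ in $\widehat{Art(D_j)} \subset \widehat{Art(D_n)}$, provided $j \geq 3$. Since $\ss_j$ itself also centralizes $\ss_1'$, the whole product $F(\ss_j) = f(\ss_j^2,\eta_j)\,\ss_j^\la\,f(\eta_j,\ss_j^2)$ centralizes $\ss_1'$, and likewise $\ss_1$. This disposes of the second family of relations (and, as a by-product, shows $F(\ss_1)$ commutes with $F(\ss_j)$ for $j \geq 3$ as well).

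Next I would handle $F(\ss_i)F(\ss_j) = F(\ss_j)F(\ss_i)$ for $|i-j| \geq 2$, say $2 \leq i < j$ with $j - i \geq 2$. Here I expect the cleanest route is to observe that $F(\ss_i)$ lies in the subgroup $\widehat{Art(D_i)}$ generated by $\ss_1,\ss_1',\ss_2,\dots,\ss_{i-1}$ together with $\ss_i$ (it is a word in $\ss_i$ and $\eta_i$, and $\eta_i \in Art(D_i)$), while $F(\ss_j)$ is a word in $\ss_j$ and $\eta_j = \ss_{j-1}\dots\ss_2\ss_1\ss_1'\ss_2\dots\ss_{j-1}$. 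Since $j - i \geq 2$, the generator $\ss_j$ commutes with $\ss_1,\ss_1',\ss_2,\dots,\ss_{j-2}$, hence with everything appearing in $F(\ss_i)$ (note $i \leq j-2$, so $\ss_i$ is among the commuting generators and $\eta_i$ is a word in $\ss_1,\ss_1',\dots,\ss_{i-1}$). It remains to check that $\eta_j$ commutes with $F(\ss_i)$; since $\eta_j$ is a word in $\ss_1,\ss_1',\ss_2,\dots,\ss_{j-1}$, and $F(\ss_i)$ involves only $\ss_i$ and $\ss_1,\dots,\ss_{i-1}$ (through $\eta_i$), this reduces to standard braid-relation bookkeeping in $Art(D_n)$: the only generators in $\eta_j$ that are adjacent to or equal to a generator occurring in $F(\ss_i)$ are near the index $i$, and one checks that the sub-word $\ss_{i}\dots\ss_{j-1}\dots\ss_i$ of $\eta_j$ slides past $F(\ss_i)$. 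I would make this precise using the braid identities of Lemma \ref{lemcalcD}(1)--(2), which is exactly the kind of conjugation-shifting those items are designed for.

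The main obstacle I anticipate is the last point: controlling the interaction between $\eta_j$ and $F(\ss_i)$ when the index $i$ sits "inside" the word $\eta_j = \ss_{j-1}\dots\ss_i\dots\ss_1\ss_1'\dots\ss_i\dots\ss_{j-1}$. One has to be careful that, although individual far-apart generators commute, the element $\eta_i$ buried in $F(\ss_i)$ is a longer word, and $\eta_j$ contains generators $\ss_i,\ss_{i+1}$ that are adjacent to generators in $\eta_i$. The clean way around this is to note that $\eta_j$ normalizes $Art(D_j)$ and induces there the diagram automorphism $\phi$ (Lemma \ref{lemcommD}'s proof, via $\eta_j = \Delta_{j-1}^{-1}\Delta_j$), and that $\phi$ fixes $\ss_i$ for $i > 1$ and fixes $\eta_i$ (as $\eta_i$ is symmetric in $\ss_1,\ss_1'$); hence conjugation by $\eta_j$ fixes $F(\ss_i)$ for $2 \leq i \leq j-1$, in particular for $i \leq j-2$. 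Combined with the fact that $\ss_j$ commutes with everything in $F(\ss_i)$ when $i \leq j-2$, this gives $F(\ss_j)F(\ss_i) = F(\ss_i)F(\ss_j)$, completing the proof.
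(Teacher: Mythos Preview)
Your argument is correct and follows the same route as the paper. For the $\ss_1'$ (and $\ss_1$) relations you invoke Lemma~\ref{lemcommD}(2), exactly as the paper does. For $i,j\geq 2$ the paper compresses everything into the single assertion ``$\eta_i$ commutes with $\ss_j$ in these cases''; your final clean argument---that conjugation by $\eta_j$ on $Art(D_j)$ is the diagram automorphism $\phi$ (via $\eta_j=\Delta_{j-1}^{-1}\Delta_j$), and that $\phi$ fixes both $\ss_i$ and $\eta_i$ (the latter because $\ss_1\ss_1'=\ss_1'\ss_1$)---is precisely the content behind that assertion. Your earlier ``braid-relation bookkeeping'' paragraph is an unnecessary detour; you can drop it and go straight to the $\phi$ argument. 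One small wording fix: $F(\ss_i)$ lies in $\widehat{Art(D_{i+1})}$, not $\widehat{Art(D_i)}$ (your parenthetical ``together with $\ss_i$'' shows you know this, but the notation should match).
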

\begin{proof}
When $i,j \geq 2$ for the first equation it follows from the fact
that $\eta_i$ commutes with $\ss_j$ in these cases. We can
thus assume $i = 1$ for the first equation. It is then
a consequence of the fact that $f \in (\widehat{F_2},\widehat{F_2})$
by lemma \ref{lemcommD} (2).
\end{proof}

The braid relation $F(\ss_1) F(\ss_2) F(\ss_1) = F(\ss_2) F(\ss_1) F(\ss_2)$
does not, at least readily, follow from the defining equation
of $\widehat{GT}$. Indeed, the restriction to $<\ss_1,\ss_2> \simeq Art(A_2)$
from $Art(D_n)$ of the action of $G(\Q)$ is $\ss_1 \mapsto \ss_1^{\la}$,
$\ss_2 \mapsto f(\ss_2^2,\ss_1 \ss'_1) \ss_2^{\la} f(\ss_1 \ss'_1,\ss_2^2)$,
whereas the natural (Drinfeld) action of $\widehat{GT}$ on
$Br_3$ is $\ss_1 \mapsto \ss_1^{\la}$, $\ss_2 \mapsto f(\ss_2^2 ,\ss_1^2)
\ss_2^{\la} f(\ss_1^2,\ss_2^2)$. However, identifying
$Art(D_3) = <\ss_1,\ss_2,\ss_1'>$ with $Art(A_3) = Br_4$ we notice
that the two
actions almost coincide when $(\la,f)$ belong to the
subgroup $\GGam' \subset \widehat{GT}$ defined in \cite{NAKASCH}.
Indeed, recall from \cite{NAKASCH} lemma 4.2 that, when $(\la,f) \in \GGam'$,
its natural action on the image of $\widehat{Br_4}$ modulo center
\footnote{This embedding is defined in \S 3 of \cite{NAKASCH} p. 518. The ambiguity here is only apparent, as
the short exact sequence $1 \to Z(Br_4) \to Br_4 \to Br_4/Z(Br_4) \to 1$
survives the profinite completion : $Br_n/Z(Br_n)$ is good,
because of the short exact sequences $1 \to P_n/Z(P_n) \to Br_n/Z(Br_n)
\to \mathfrak{S}_n \to 1$ and $1 \to F_{n-1} \to P_n/Z(P_n) \to P_{n-1}/Z(P_{n-1}) \to 1$}
inside the profinite mapping class group $\widehat{\Gamma}_0^{[5]}$
composed with $\mathrm{Ad} f(x_{45},x_{34})$
satisfies $\ss_2 \mapsto f(\ss_2^2,\ss_1\ss_3) \ss_2^{\la} f(\ss_1 \ss_3,\ss_2^2)$.
Here $x_{34} = \ss_3^2$ and $x_{45} = (\ss_1 \ss_2)^3 = \ss_2 \ss_1^2 \ss_2.
\ss_1^2$.  By the formulas (4.1) of \cite{NAKASCH} we also have
$\ss_1 \mapsto \ss_1^{\la}$.
Under the isomorphism $Art(D_3) \simeq Art(A_3) \simeq Br_4$ this
means $F(\ss_1) F(\ss_2) F(\ss_1) = F(\ss_2) F(\ss_1) F(\ss_2) \omega_4^{\mu}$
for some $\mu \in \widehat{\Z}$. Taking the image under
$\widehat{Br_4} \onto \widehat{\Z}$ one gets $3 \la = 3 \la + 12 \mu$
in $\widehat{\Z}$ hence $\mu = 0$. In particular, the formulas of Matsumoto
for the Galois action on $\widehat{Art(D_3)}$ actually define an action of $\GGam'$.

Since the condition denoted (III)' in \cite{NAKASCH} is the starting
point for extending the action of $\widehat{GT}$ to mapping class
groups of higher genus, extending the action of $G(\Q)$
on $Art(D_n)$ to $\widehat{GT}$ seems related to the open question
'$\widehat{GT} = \GGam = \GGam'$ ?' of \cite{NAKASCH,HLS}. 
Explanations for this connection can be found in
the relationship between the mapping class group
$\Gamma_{1,3}$ and $Art(D_4)$ (using the Dehn twists pictured in figure \ref{figD4M3})
as well as in the known embeddings of $Art(D_n)$
into mapping class groups of higher genus. 
\begin{figure}
\begin{center}
\resizebox{6cm}{!}{\includegraphics{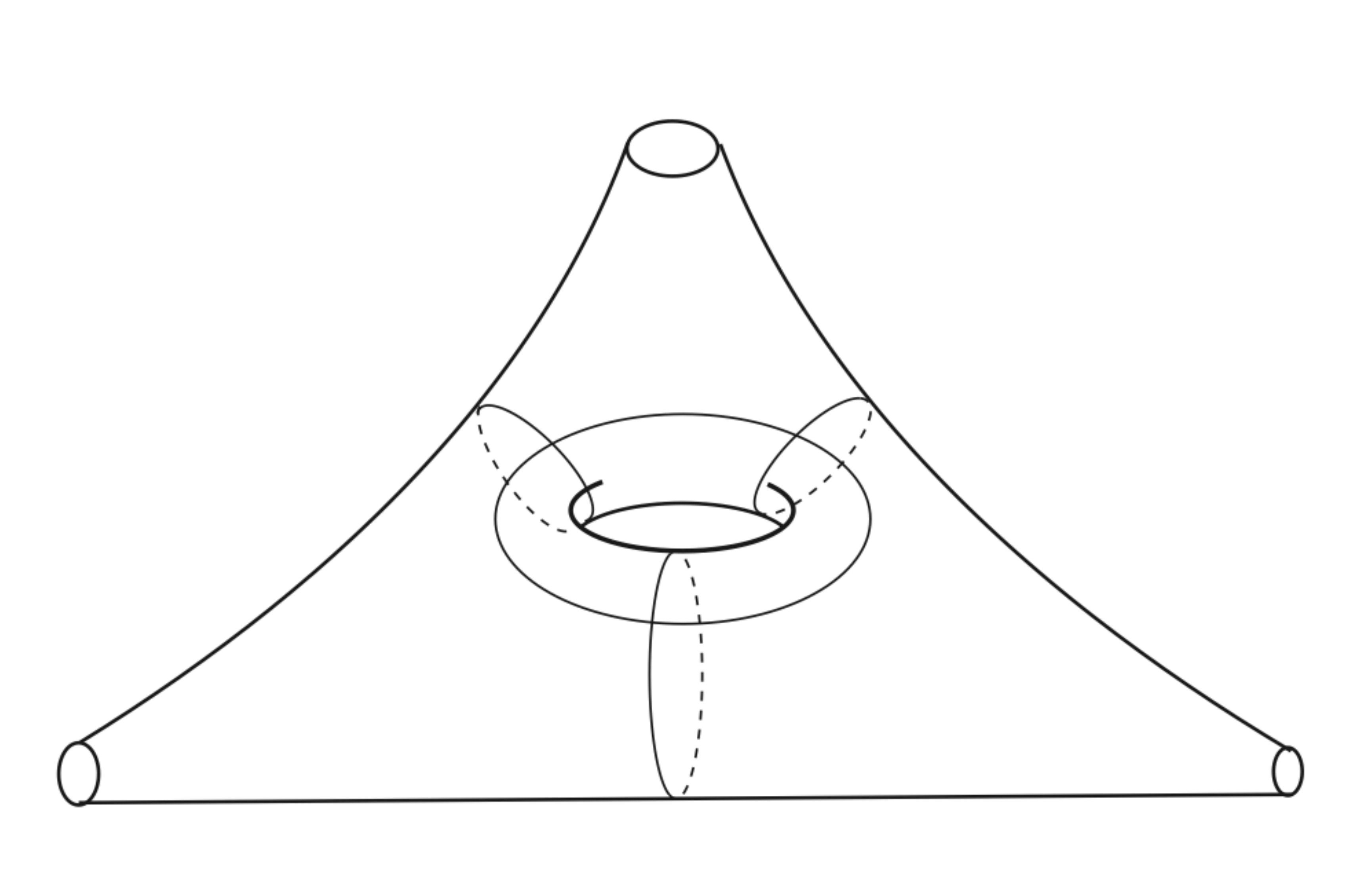}}
\end{center}
\caption{$D_4$ and $\Gamma_{1,3}$}
\label{figD4M3}
\end{figure}

Other connections
between mapping class groups and Artin groups of type $ADE$
can be found in \cite{MATSUARTIN} and \cite{MATSUANN}. Since nice presentations
for the braid groups of type $G(e,e,n)$ are now
at disposal, which generalize nicely the case of type $D_n$,
a natural question is the following.

\begin{subquest} What is the analogous picture for the more general series of groups $G(e,e,n)$ ?
Is there a related connection with mapping class groups ?
\end{subquest}

We hope to come back to this question in a forthcoming work.
A first step towards answering it would be to tackle
seriously the case of the groups $W = G(e,e,2)$. In this case
$B$ is generated by 2 generators $s,t$ with $stst\dots = tsts\dots$
($m$ factors on both sides), and the pure braid group $P$
has for derived subgroup a free group on $m$ generators. A group
capturing the Galois action will typically map $s \mapsto s^{\la}$,
$t \mapsto g^{-1} t^{\la} g$ for some $g \in \widehat{P}'$.
One can then define, following Drinfeld recipe for $\GTH$,
a group $\widehat{GD_m}$
made of elements $(\la,g)$ in $ \widehat{\Z}^{\times} \times \widehat{P}'$
such that there exists a morphism $\tilde{g}  \in \Hom(B,\widehat{B})$
satisfying
\begin{enumerate}
\item $\tilde{g}$ induces an automorphism of $\widehat{B}$
\item $\tilde{g}$ maps $s \mapsto s^{\la}$,
$t \mapsto g^{-1} t^{\la} g$ 
\item $\tilde{g}$ maps $\Delta \mapsto \Delta^{\la} g$ if $m$ odd,
$\Delta \mapsto \Delta^{\la}$ is $m$ even
\item $\tilde{g}$ maps $\Delta^2 \mapsto \Delta^{2\la}$.
\end{enumerate}
where $\Delta = stst\dots = tsts\dots$ ($m$ factors). 
In this group, composition is given by $(\la_1,g_1)\star (\la_2,g_2) = 
(\la_1 \la_2, g_1 \tilde{g}_1(g_2))$, and the conditions $(2)-(4)$
can be translated in algebraic conditions on the couple $(\la,g)$,
as in the analogous unipotent setting described in \cite{DIEDRAUX}.
The task is then to combine this setup with the one
of the usual braid group, in order to get a description for the
types $G(e,e,n)$, in such a way that it fits with the special
case of the $G(2,2,n)$. Note that a connection between these braid groups and
mapping class groups has not been found yet.

\subsubsection{The exceptional and Shephard groups.}

Of course, the remaining exceptional groups deserve equal interest, but
they are tricky to solve, as in essence they reflect very special geometric
situations. The case of Artin groups of type $E_n$, $n \in \{6,7,8 \}$ were described in
\cite{MATSUARTIN} : a description of the Galois action is given, as well as a connection
with mapping class groups of higher genus. It is shown in \cite{MATSUARTIN}
that the Galois action is compatible, in case of $W$ of type $E_7$, with the embedding
of $B$ inside the mapping class group in genus $3$. Even in that case, the question of
whether this action factorizes through $\GTH$ seems to be open -- which is not surprising
at all, as the Artin group of type $E_7$ admits a parabolic subgroup of type $D_4$.

However, there are some exceptional groups for which one \emph{knows} that
there exists an action of $\GTH$. These are groups for which the discriminantal
variety $X/W$ is isomorphic to the one for $W$ a symmetric group, or even $W$ a
Coxeter group of type $B_n$. 

These groups are the groups of symmetries of regular complex polytopes,
and are known as Shephard groups. They cover all the cases
where $X/W$ also arises from a Coxeter group.

More precisely, we have $W_1, W_2 < \GL(V)$
two different reflection groups, one of the two (say $W_1$) being a Coxeter group of type $A_n$ or $B_n$, such that $X_1/W_1 \simeq X_2/W_2$
(no such phenomenon appear for other Coxeter groups).
When $W_1$ is a symmetric group, of course the profinite completion of $B_2 \simeq B_1$ inherits an action
of $\GTH$. This happens exactly for $W_2$ of type $G_4,G_{25},G_{32}, G_8, G_{16}$ ; these groups
are the finite quotients of the usual braid group $Br_n$ by the relations $s_i^k = 1$ for some $k$, as shown by Coxeter in \cite{COXBANFF}.
(The case where $W_1$ is a Coxeter group of type $B_n$ corresponds to $W_2$ being of type $G_5,G_{10}, G_{18}, G_{26}$). We refer again to \cite{BMR}
for tables on these exceptional reflection groups and their braid groups, and to \cite{OT} \S 6.6 for the isomorphism $X_1/W_1 \simeq X_2/W_2$
in case of Shephard groups (see in particular cor. 6.126).

\begin{prob} Give a geometric interpretation of the $\GTH$-action on $X/W$, for $W = G_4,G_{25},G_{32}, G_8, G_{16}$.
Same question for $G_5,G_{10}, G_{18}, G_{26}$.
\end{prob} 

{\bf Example.} Let us consider the case of $W = G_4 < \GL_2(\C)$ generated by $\overline{s_1} = \begin{pmatrix} 1 & 0 \\ 0 & j \end{pmatrix}$
$\overline{s_2} = \frac{1}{\sqrt{-3}}\begin{pmatrix} -1 & j \\ 2 & j \end{pmatrix}$ with $j = \exp \frac{2 \ii \pi}{3}$ and $\sqrt{-3} = j-j^2$. Its braid group is generated by
lifts $s_1,s_2$ with relations $s_1 s_2 s_1 = s_2 s_1 s_2$, that is $B = Br_3$. We detail in this example a natural Belyi covering map
for $X/W$.
The complex conjugation acts
by $s_1 \mapsto s_1^{-1}, s_2 \mapsto s_1 s_2^{-1} s_1^{-1}$, and one finds by explicit computation
that $\C[z_1,z_2]^W = \C[g_1,g_2]$ with $g_1 = z_1^4 - z_1 z_2^3$,
$g_2 = z_1^6 + (5/2) z_1^3 z_2^3 - (1/8) z_2^6$, and an equation for $Y = X(G_4)/G_4$ inside $\Spec \C[g_1,g_2]$ is $g_1^3 \neq g_2^2$.
This has to be compared with the situation with $\mathfrak{S}_3 < \GL_2(\C)$, generated by $\hat{s}_1 = \begin{pmatrix} 1 & -1 \\ 0 & -1 \end{pmatrix}$
and $\hat{s}_2 = \begin{pmatrix} -1 & 0 \\ -1 & 1 \end{pmatrix}$ with $\C[z_1,z_2]^{\mathfrak{S}_3} = \C[f_1,f_2]$, $f_1 = z_1^2 - z_1 z_2 + z_2^2$,
$f_2 = z_1^3 -(3/2) z_1^2 z_2 - (3/2) z_1 z_2^2 + z_2^3$, $Y = X(\mathfrak{S}_3)/\mathfrak{S}_3$ having equation $f_1^3 \neq f_2^2$ in $\Spec \C[f_1,f_2]$ ; for the $\mathfrak{S}_3$-case
the space $X = X(\mathfrak{S}_3)$ is isomorphic to $C \times P$ under $(z_1,z_2) \mapsto (z_1,z_2/z_1)$ with $C = \A^1 \setminus \{ 0 \}$
and $P = \A^1 \setminus \{ 0 ,\infty \} = \P^1 \setminus \{ 0, 1 , \infty \}$. The \'etale covering $X(G_4) \to Y$ can then be pulled
back to an \'etale covering map $E  = X(\mathfrak{S}_3) \times_{Y} X(G_4) \to X(\mathfrak{S}_3)$ and then restricted to an \'etale covering map $E_0 \to P = \{ 1 \} \times P$
with fiber $G_4$.

(For the algebraically inclined reader, we notice that all computations can be done explicitely at the level of coordinate rings,
as all varieties used are affine. For instance, $E = \Spec S^{-1} A$ for $A = \C[u,v] \otimes \C[z_1,z_2]/g_i(u,v)=f_i(z_1,z_2)$
and $S = (g_1^3 - g_2^2)$, and $S_0 = \Spec A_0$ with $A_0 = (S^{-1} A) \otimes_{\C[z_1]} \C$ under $z_1 \mapsto 1$.
Given the explicit values of $g_1,g_2,f_1,f_2$ chosen above, this means $A_0 = \C[z^{-1},(z-1)^{-1},z,u,v]/I$, where
$I$ is generated by $u^4 - uv^3 - (1-z+z^2)$
and $u^6 + (5/2) u^3 v^3 - (1/8) v^6 - (1- (3/2) z - (3/2) z^2 + z^3)$.)

The situation can be summarized as the following sequence of morphisms between \'etale $G_4$-coverings
$$
\xymatrix{
E_0 \ar@{^{(}->}[r] \ar[dd]_{G_4} & E \ar[dd]_{G_4} \ar@{=}[r] & E \ar[dd]_{G_4}\ar[rr]^{\mathfrak{S}_3} & & X(G_4) \ar[dd]_{G_4} \\
 & & & \square \\
\{ 1 \} \times P\  \ar@{^{(}->}[r] & C \times P \ar@{=}[r] & X(\mathfrak{S}_3) \ar[rr]_{\mathfrak{S}_3} &  & Y
}
$$
In particular, the action of $\pi_1(P)$ on the fiber of $E_0 \to P$ can be computed on the fiber
of $X(G_4) \to Y$. For this, and since the morphism $Br_3 \to G_4$ is known, one only needs to compute
the image of $\pi_1(P) \to \pi_1(X/W) = Br_3$. We take $\frac{1}{2}$ for base-point in $P$, and denote
$x,y,z$ the usual generators of $\pi_1(P,\frac{1}{2})$ (see figure \ref{P1gens}). Figure \ref{figchambre} shows the image of $x$ and
$y$ in $\pi_1(X(\mathfrak{S}_3),(1,\frac{1}{2}))$ as well as homotopies (in green rays) with the standard loops around the
walls of
the Weyl chamber containing $(1,\frac{1}{2})$. The two points singled out in the pictures are the basepoint and its image under the
relevant reflection. We denoted $\hat{s}_3 = \hat{s_2} \hat{s}_1 \hat{s_2}= \hat{s_1} \hat{s}_2 \hat{s_1}$. The left-hand side of figure \ref{figchambre}
lives in $X(\mathfrak{S}_3) \cap \R^2 \oplus \ii \R (0,1)$, while the right-hand side lives
in $X(\mathfrak{S}_3) \cap \R^2 \oplus \ii \R (1,-1)$. It follows that, up to $Br_3$-conjugation, $x$ and $y$ are mapped to the squares of the
standard generators of $Br_3$. In particular, the usual generators $x,y,z = (xy)^{-1}$ act
(again, up to global conjugacy) by $\bar{s}_1^2, \bar{s}_2^2,(\bar{s}_1^2 \bar{s}_2^2)^{-1}$.
\begin{figure}
\begin{center}
\resizebox{6cm}{!}{\includegraphics{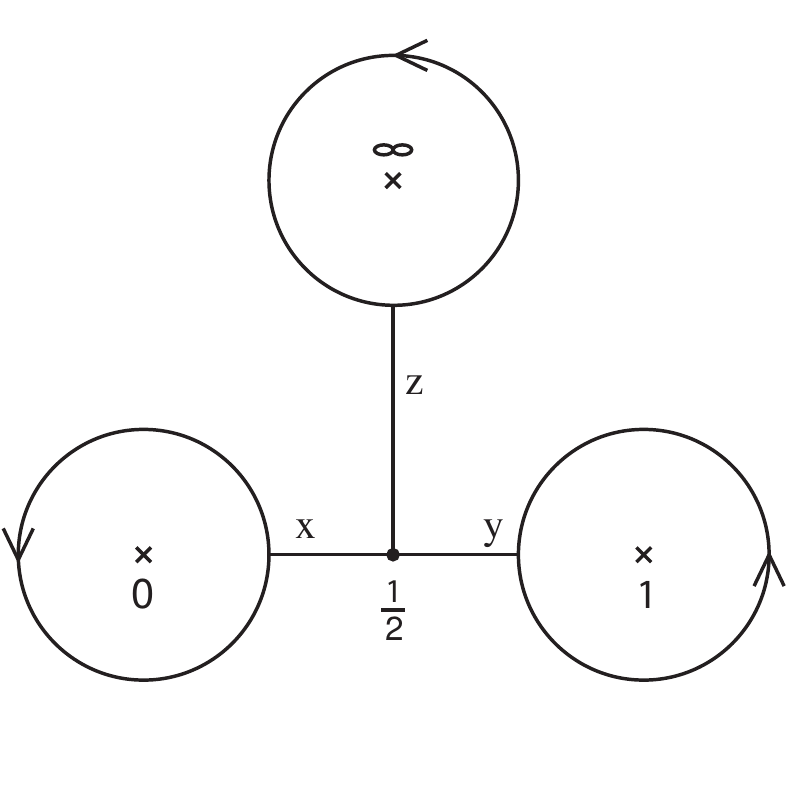}}
\end{center}
\caption{Generators of the $\pi_1$ of $\C \setminus \{ 0,1 \} = \mathbbm{P}^1 \setminus \{ 0,1 \infty \}$}
\label{P1gens}
\end{figure}

\begin{figure}
$$\resizebox{6cm}{!}{\includegraphics{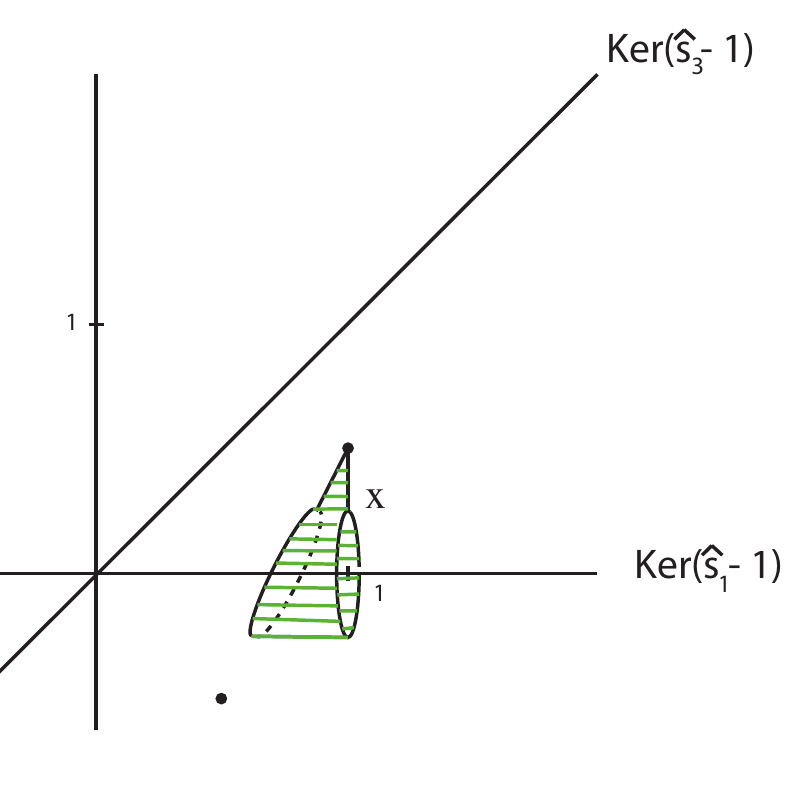}} \ \ \resizebox{6cm}{!}{\includegraphics{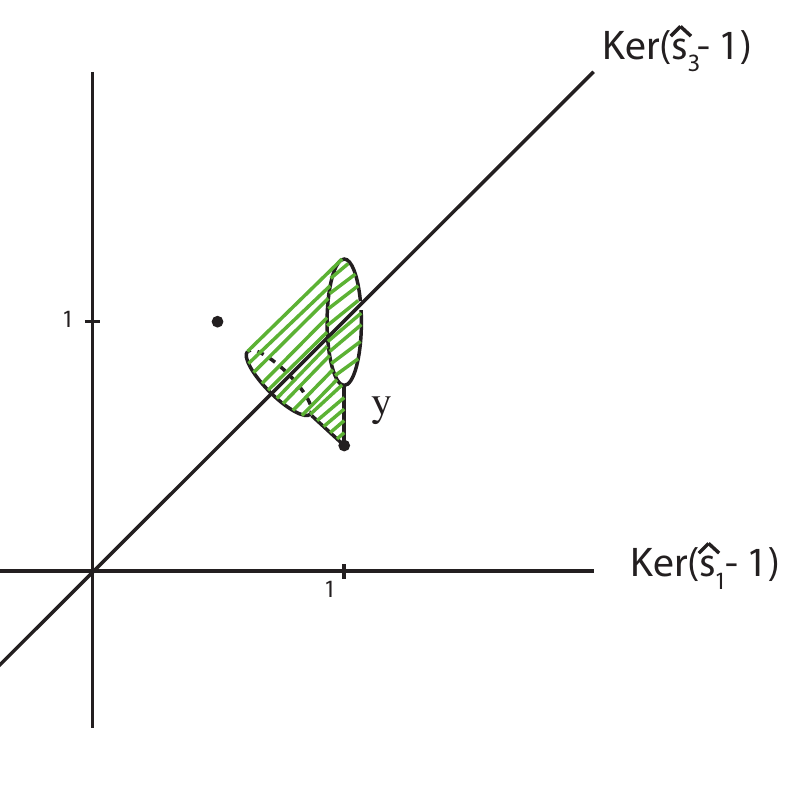}}
$$
\caption{Images of $x,y$ in $X(\mathfrak{S}_3)$ and homotopies with the standard loops.}
\label{figchambre}
\end{figure}

Extending this \'etale covering
of $P$ to a ramified covering map $\overline{E_0} \to \P^1$ we get the ramification at $0,1,\infty$ by considering the orders of these elements :
$\bar{s}_1^2$ and $\bar{s}_2^2$ have order $3$, whereas $(\bar{s}_1^2 \bar{s}_2^2)^{-1}$ has order $6$. This means that the fiber over
$0$ and $1$ has 8 elements with ramification index 3, whereas the fiber over infinity has 4 elements with ramification
index 6. Riemann-Hurwitz formula shows that $\overline{E_0}$ has genus $4$.



\end{document}